\newlength{\abovebis} 
\newlength{\belowbis} 
\newlength{\aboveshortbis} 
\newlength{\belowshortbis} 
\everydisplay\expandafter{%
  \the\everydisplay 
  \advance\abovedisplayskip\abovebis 
  \advance\belowdisplayskip\belowbis 
  \advance\abovedisplayshortskip\aboveshortbis 
  \advance\belowdisplayshortskip\belowshortbis 
} 
\def\R{\mathbb{R}}
\def\N{\mathbb{N}}
\def\C{\mathbb{C}}
\def\Ree{\mathrm{Re}}
\def\Imm{\mathrm{Im}}
\theoremstyle{plain}
\newtheorem{lem}{Lemma}[section]
\newtheorem{theo}[lem]{Theorem}
\newtheorem{prop}[lem]{Proposition}
\theoremstyle{definition}
\numberwithin{equation}{section}
\begin{document}
\title[Global stability in 2D]{A global stability estimate for the Gel'fand-Calder\'on inverse problem in two dimensions}
\author{Roman G. Novikov}
\author{Matteo Santacesaria}
\address[R. G. Novikov and M. Santacesaria]{Centre de Mathématiques Appliquées, \'Ecole Polytechnique, 91128, Palaiseau, France}
\email{novikov@cmap.polytechnique.fr, santacesaria@cmap.polytechnique.fr}
\begin{abstract}
We prove a global logarithmic stability estimate for the Gel'fand-Calder\'on inverse problem on a two-dimensional domain.
\end{abstract}

\maketitle

\section{Introduction}

Let $D$ be an open bounded domain in $\R^2$ with with $C^2$ boundary and let $v \in C^1(\bar D)$. The Dirichlet-to-Neumann map associated to $v$ is the operator $\Phi : C^1(\partial D) \to L^p(\partial D), \; p < \infty$ defined by:
\begin{equation}
\Phi(f) = \left.\frac{\partial u}{\partial \nu}\right|_{\partial D}
\end{equation}
where $f \in C^1(\partial D)$, $\nu$ is the outer normal of $\partial D$ and $u$ is the $H^1 (\bar D)$-solution of the Dirichlet problem
\begin{equation} \label{equa}
-\Delta u + v(x) u =0 \; \; \textrm{on } D, \; \; u|_{\partial D} = f;
\end{equation}
here we assume that $0$ is not a Dirichlet eigenvalue for the operator $- \Delta + v$ in $D$.

Equation \eqref{equa} arises, in particular, in quantum mechanics, acoustics, electrodynamics; formally, it looks like the Schr\"odinger equation with potential $v$ at zero energy.
\smallskip

The following inverse boundary value problem arises from this construction: given $\Phi$ on $\partial D$, find $v$ on $D$.

This problem can be considered as the Gel’fand inverse boundary value problem for the Schr\"odinger equation at zero energy (see \cite{G}, \cite{N1}) and can also be seen as a generalization of the Calder\'on problem for the electrical impedance tomography (see \cite{C}, \cite{N1}).

The global injectivity of the map $v \to \Phi$ was firstly proved in \cite{N1} for $D \subset \R^d$ with $d \geq 3$ and in \cite{B} for $d = 2$ with $v \in L^p$. A global stability estimate for the Gel'fand-Calder\'on problem for $d \geq 3$ was firstly proved by Alessandrini in \cite{A}; this result was recently improved in \cite{N2}.

In this paper we show that, also in the two dimensional case, an estimate of the same type as in \cite{A} is valid. Indeed out main theorem is the following:

\begin{theo} \label{maintheo}
Let $D \subset \R^2$ be an open bounded domain with $C^2$ boundary, let $v_1 , v_2 \in C^2(\bar D)$ with $\|v_j \|_{C^2(\bar D)} \le N$ for $j =1,2$, and $\Phi_1 , \Phi_2$ the corresponding Dirichlet-to-Neumann operators. For simplicity we assume also that $v_j|_{\partial D} = 0$ and $\frac{\partial}{\partial \nu} v_j|_{\partial D}=0$ for $j=1,2$. Then there exists a constant $C = C(D, N)$ such that
\begin{equation} \label{est}
\|v_2 - v_1\|_{L^{\infty}(D)} \leq C (\log(3 + \|\Phi_2 - \Phi_1\|^{-1} ))^{-\frac 1 2} \log (3 \log(3+\| \Phi_2 -\Phi_1\|^{-1})),
\end{equation}
where $\| A\|$ denotes the norm of an operator $A : L^{\infty}(\partial D) \to L^{\infty}(\partial D)$.
\end{theo}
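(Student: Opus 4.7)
The strategy I would follow is an adaptation to dimension two of the Alessandrini stability scheme. The first step is to construct, for each $v_j$, a family of complex geometric optics (CGO) solutions $\psi_j(\cdot,\lambda)$ of $-\Delta\psi_j + v_j\psi_j = 0$ in $D$, depending on a large complex spectral parameter $\lambda \in \C$. In two dimensions at zero energy one can use either Faddeev--Beals--Coifman--Novikov type solutions of the form $\psi_j = e^{\lambda z}\mu_j$ with $\mu_j(\cdot,\lambda)\to 1$ as $|\lambda|\to\infty$, or Bukhgeim-type CGO solutions with quadratic phase; in both cases one needs existence, uniform $L^\infty$ bounds of the form $\|\psi_j(\cdot,\lambda)\|_{L^\infty(\partial D)}\le Ce^{C|\lambda|}$ (with $C=C(D,N)$), and a controlled large-$|\lambda|$ asymptotic expansion that recovers $v_j$.

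The second step is to introduce a Faddeev-type ``generalized scattering amplitude'' $h_j(\lambda)$, defined as a boundary integral involving $\Phi_j$ and $\psi_j|_{\partial D}$. The Alessandrini identity then yields a representation
\[
h_1(\lambda)-h_2(\lambda)=\int_{\partial D}\psi_1(\cdot,\lambda)\,(\Phi_1-\Phi_2)\,\psi_2(\cdot,\lambda)\,d\sigma,
\]
which together with the exponential bound above gives $|h_1(\lambda)-h_2(\lambda)|\le Ce^{C|\lambda|}\|\Phi_1-\Phi_2\|$. Conversely, an inverse-scattering reconstruction formula (the $\bar{\partial}_{\lambda}$--method, respectively Bukhgeim's formula) expresses $v_j$ in terms of $h_j$ modulo a remainder that tends to zero as $|\lambda|\to\infty$, with a quantitative rate provided by the a priori bound $\|v_j\|_{C^2(\bar D)}\le N$.

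The final step is a splitting argument with cutoff $\rho>0$: the ``low-frequency'' contribution $|\lambda|\le\rho$ is controlled by $e^{C\rho}\|\Phi_1-\Phi_2\|$ via the scattering stability, whereas the ``high-frequency'' tail $|\lambda|>\rho$ is bounded by a negative power (up to logarithmic factors) of $\rho$ via the a priori $C^2$-bound and the reconstruction formula. Choosing $\rho\sim\log(3+\|\Phi_1-\Phi_2\|^{-1})$ and optimizing yields the desired estimate \eqref{est}.

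The main obstacle I expect is the slow decay of the remainder in the two-dimensional reconstruction formula: unlike the case $d\ge 3$, where the remainder decays like $|\lambda|^{-1}$ and yields the classical $(\log)^{-\alpha}$ estimate of \cite{A}, in 2D the relevant remainder decays only like $|\lambda|^{-1/2}$ up to logarithmic corrections, which is exactly what forces the exponent $-\tfrac12$ and the extra $\log\log$ factor in \eqref{est}. A second technical difficulty is that the $\bar\partial_\lambda$-equation for $\mu_j$ may degenerate near $\lambda=0$ and that the CGO solutions are not a priori invertible for all $\lambda$; handling these uniformly over the class $\{v\in C^2(\bar D):\|v\|_{C^2}\le N\}$, which is needed to make the scattering data $h_j$ well defined for all $\lambda\in\C$, is the delicate part of the argument.
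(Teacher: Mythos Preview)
Your skeleton is right---Alessandrini's identity, CGO solutions, an exponential bound on the boundary term, and optimization in $|\lambda|$---but the mechanism you sketch for the interior step is not the one that actually works, and the alternatives you list are not interchangeable. In two dimensions the linear-phase Faddeev solutions $e^{\lambda z}\mu$ and the associated $\bar\partial_\lambda$-reconstruction do not yield a stability estimate of this type; the paper uses exclusively the Bukhgeim solutions with \emph{quadratic} phase centred at a variable point $z_0\in D$, namely $\psi_{j,z_0}(z,\lambda)=e^{\lambda(z-z_0)^2}\mu_{j,z_0}(z,\lambda)$. Pairing $\psi_{2,z_0}(\cdot,\lambda)$ with $\overline{\psi_{1,z_0}(\cdot,-\lambda)}$ in Alessandrini's identity produces on the left the oscillatory integral $\int_D e^{\lambda(z-z_0)^2-\bar\lambda(\bar z-\bar z_0)^2}(v_2-v_1)\mu_2\bar\mu_1$, whose unique stationary point is $z_0$; the stationary-phase method then gives $(v_2-v_1)(z_0)$ directly, with a remainder of order $|\lambda|^{-1/2}\log(3|\lambda|)$ coming from the estimates $\|\mu_j-1\|=O(|\lambda|^{-1/2})$ and from the $C^2$ control of the potentials (this is where the boundary conditions $v_j|_{\partial D}=\partial_\nu v_j|_{\partial D}=0$ are used). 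There is no integration or splitting over a range of $\lambda$: one fixes a single $|\lambda|=\gamma\log(3+\|\Phi_2-\Phi_1\|^{-1})$ and balances the two error terms.

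Consequently your anticipated difficulties are largely artificial. No $\bar\partial_\lambda$-equation enters, so there is nothing to degenerate near $\lambda=0$; the integral equation for $\mu_{z_0}$ is solved by a Neumann series for $|\lambda|$ large (uniformly for $\|v\|_{C^2}\le N$), which is all that is needed. The genuine technical work lies elsewhere: proving the $|\lambda|^{-1/2}$ mapping bound for the Bukhgeim Green operator $g_{z_0,\lambda}$ on $C^1_{\bar z}(\bar D)$, and the quantitative stationary-phase estimate (Lemma~\ref{lem2}) that turns $|\lambda|\int_D e^{\lambda(z-z_0)^2-\bar\lambda(\bar z-\bar z_0)^2}v$ into $v(z_0)+O(|\lambda|^{-1}\log|\lambda|)$. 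Your diagnosis of \emph{why} the exponent is $-\tfrac12$ with a $\log\log$ correction is, however, exactly right.
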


This is the first result about the global stability of the Gel'fand-Calder\'on inverse problem in two dimension, for general potentials. Results of such a type were only known for special kinds of potentials, e.g. potentials coming from conductivities (see \cite{L} for example). Note also that for the Calder\'on problem (of the electrical impedance tomography) in its initial formulation the global injectivity was firstly proved in \cite{SU} for $d \ge 3$ and in \cite{Na} for $d=2$.
\smallskip

Instability estimates complementing the stability estimates of \cite{A}, \cite{L}, \cite{N2} and of the present work are given in \cite{M}.

The proof of Theorem \ref{maintheo} takes inspiration mostly from \cite{B} and \cite{A}. For $z_0 \in D$ we show existence and uniqueness of a family of solution $\psi_{z_0}(z,\lambda)$ of equation \eqref{equa} where in particular $\psi_{z_0} \to e^{\lambda(z - z_0)^2}$, for $\lambda \to \infty$. This is accomplished by introducing a special Green's function for the Laplacian which satisfies precise estimates. Then, using Alessandrini's identity along with stationary phase techniques, we obtain the result.

\smallskip
An extension of Theorem \ref{maintheo} for the case when we do not assume that $v_j|_{\partial D} = 0$ and $\frac{\partial}{\partial \nu} v_j|_{\partial D}=0$ for $j=1,2$ is given in section \ref{sec6}.

\section{Bukhgeim-type analogues of the Faddeev functions} \label{Bukh}

In this section we introduce the above-mentioned family of solutions of equation \eqref{equa}, which will be used throughout all the paper.
\smallskip

We identify $\R^2$ with $\C$ and use the coordinates $z= x_1 + i x_2, \; \bar z = x_1 - i x_2$ where $(x_1, x_2) \in \R^2$. 
Let us define the function spaces $C^1_{\bar z}(\bar D) = \{ u : u, \frac{\partial u}{ \partial \bar z} \in C(\bar D) \}$ with the norm $\|u \|_{C^1_{\bar z}(\bar D)} = \max ( \|u\|_{C(\bar D)}, \| \frac{\partial u}{\partial \bar z} \|_{C(\bar D)} )$, $C^1_{z}(\bar D) = \{ u : u, \frac{\partial u}{ \partial z} \in C(\bar D) \}$ with an analogous norm and the following functions:
\begin{align} \label{eq11}
G_{z_0}(z,\zeta, \lambda) &= e^{\lambda (z-z_0)^2}g_{z_0} (z,\zeta,\lambda)e^{-\lambda (\zeta-z_0)^2},\\ \label{eq12}
g_{z_0}(z,\zeta, \lambda) &= \frac{e^{\lambda(\zeta-z_0)^2-\bar \lambda(\bar \zeta - \bar z_0)^2}}{4 \pi^2} \int_D \frac{e^{-\lambda(\eta -z_0)^2+\bar \lambda(\bar \eta -\bar z_0)^2}}{(z- \eta)(\bar \eta -\bar \zeta)} d\Ree \eta \, d \Imm \eta, \\ \label{eq13}
\psi_{z_0} (z,\lambda) &= e^{\lambda (z-z_0)^2} \mu_{z_0}(z,\lambda), \\ \label{eq14}
\mu_{z_0} (z,\lambda) &= 1 + \int_D g_{z_0} (z,\zeta, \lambda) v(\zeta) \mu_{z_0} (\zeta, \lambda) d \mathrm{Re}\zeta \, d \mathrm{Im} \zeta ,\\ \label{eq15}
h_{z_0} (\lambda) &= \int_D e^{\lambda (z-z_0)^2 - \bar \lambda (\bar z - \bar z_0)^2} v(z) \mu_{z_0} (z,\lambda) d \mathrm{Re}z \, d \mathrm{Im} z,
\end{align}
where $z , z_0, \zeta \in D$ and $\lambda \in \C$. In addition, equation \eqref{eq14} at fixed $z_0$ and $\lambda$, is considered as a linear integral equation for $\mu_{z_0}( \cdot, \lambda) \in C^1_{\bar z} (\bar D)$.

We have that
\begin{align} \label{eq21}
4 \frac{\partial^2}{\partial z \partial \bar z} &G_{z_0} (z, \zeta, \lambda) = \delta(z-\zeta), \\ \label{eq22}
4\left(\frac{\partial}{\partial z} + 2\lambda (z-z_0) \right) \frac{\partial}{\partial \bar z} &g_{z_0}(z,\zeta, \lambda) = \delta(z-\zeta), \\ \label{eq23}
-4 \frac{\partial^2}{\partial z \partial \bar z} &\psi_{z_0} (z,\lambda) + v(z) \psi_{z_0}(z,\lambda) = 0,\\ \label{eq24}
-4 \left(\frac{\partial}{\partial z} + 2\lambda (z-z_0) \right) \frac{\partial}{\partial \bar z} &\mu_{z_0}(z, \lambda) + v(z) \mu_{z_0} (z,\lambda) = 0,
\end{align}
where $z, z_0 , \zeta \in D$, $\lambda \in \C$, $\delta$ is the Dirac's delta. Formulas \eqref{eq21}-\eqref{eq24} follow from \eqref{eq11}-\eqref{eq14} and from
\begin{equation} \nonumber
\frac{\partial}{\partial \bar z} \frac{1}{\pi z} = \delta(z), \; \;
\left(\frac{\partial}{\partial z} + 2\lambda (z-z_0) \right) \frac{e^{-\lambda(z-z_0)^2+\bar \lambda (\bar z- \bar z_0)^2}}{\pi \bar z} e^{\lambda z_0^2- \bar \lambda \bar z_0^2}  = \delta(z),
\end{equation}
where $z, z_0 , \lambda \in \C$.

We say that the functions $G_{z_0}$, $g_{z_0}$, $\psi_{z_0}$, $\mu_{z_0}$, $h_{z_0}$ are the Bukhgeim-type analogues of the Faddeev functions (see \cite{N1}, \cite{Na}, \cite{B}).

\section{Estimates for $g_{z_0}, \mu_{z_0}, h_{z_0}$}

This section is devoted to crucial estimates concerning the functions defined in section \ref{Bukh}.
\smallskip

Let
\begin{equation} \label{green}
g_{z_0, \lambda} u(z) = \int_D g_{z_0} (z, \zeta, \lambda) u(\zeta) d \mathrm{Re}\zeta \, d \mathrm{Im}\zeta, \; z \in \bar D, \; z_0, \lambda \in \C,
\end{equation}
where $g_{z_0}(z, \zeta, \lambda)$ is defined by \eqref{eq12} and $u$ is a test function.

\begin{lem} \label{lem1}
Let $g_{z_0, \lambda} u$ be defined by \eqref{green}, where $u \in C^1_{\bar z}(\bar D)$, $z_0, \lambda \in \C$. Then the following estimates hold:
\begin{align} \nonumber
&g_{z_0, \lambda} u \in C^1_{\bar z}(\bar D), \\ \label{est1}
\| &g_{z_0, \lambda} u \|_{C^1_{\bar z}(\bar D)} \leq \frac{c_1(D)}{|\lambda|^{\frac 1 2}} \|u \|_{C^1_{\bar z}(\bar D)}, \; |\lambda| \geq 1, \\ \label{est2}
\|&\frac{\partial}{\partial z}g_{z_0,\lambda}u\|_{L^p(\bar D)} \leq \frac{c_2(D,p)}{|\lambda|^{ \frac 1 2}}\|u\|_{C^1_{\bar z} (\bar D)} , \; |\lambda| \geq 1, \; 1 < p < \infty.
\end{align}
\end{lem}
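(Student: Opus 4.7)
I would reduce all three estimates to a single oscillatory-integral bound on a conjugate Cauchy integral, and then prove that bound by a stationary-phase splitting at the natural scale $R:=|\lambda|^{-1/2}$. Differentiating \eqref{eq12} in $\bar z$, the $z$-dependence enters only through $1/(z-\eta)$, and $\partial_{\bar z}(z-\eta)^{-1}=\pi\delta(z-\eta)$ collapses the inner $\eta$-integral to give the closed form
\[
\partial_{\bar z}g_{z_0}(z,\zeta,\lambda)=\frac{1}{4\pi}\frac{E(\zeta)E(z)^{-1}}{\bar z-\bar\zeta},\qquad E(w):=e^{\lambda(w-z_0)^2-\bar\lambda(\bar w-\bar z_0)^2},
\]
with $|E|\equiv 1$. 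Setting $J(z,\lambda):=\int_D E(\zeta)u(\zeta)(\bar z-\bar\zeta)^{-1}\,d\mathrm{Re}\zeta\,d\mathrm{Im}\zeta$, one has $4\pi\,\partial_{\bar z}(g_{z_0,\lambda}u)=E^{-1}J$. Fubini rewrites $g_{z_0,\lambda}u$ as $(4\pi^2)^{-1}$ times the Cauchy transform of $E^{-1}J(\cdot,\lambda)$, so the $L^\infty$-bound on $g_{z_0,\lambda}u$ follows from the one on $J$ combined with the uniform estimate $\sup_{z\in\bar D}\int_D|z-\eta|^{-1}\,d\mathrm{Re}\eta\,d\mathrm{Im}\eta<\infty$. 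For \eqref{est2}, the same Fubini identity plus the $L^p$-boundedness of the Beurling--Ahlfors transform (which coincides with $\partial_z$ applied to the Cauchy transform) reduces \eqref{est2} to a bound on $\|J(\cdot,\lambda)\|_{L^p(D)}$, and hence on $\|J(\cdot,\lambda)\|_\infty$.

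\smallskip
\noindent\textbf{Stationary-phase estimate.}  It remains to show $\|J(\cdot,\lambda)\|_{L^\infty(\bar D)}\le C(D)\|u\|_{C^1_{\bar z}}|\lambda|^{-1/2}$, uniformly in $z_0\in\C$ for $|\lambda|\ge 1$. The phase $2i\,\mathrm{Im}(\lambda(\zeta-z_0)^2)$ has its unique critical point at $\zeta=z_0$, which dictates the scale $R=|\lambda|^{-1/2}$. I split $J=J_{\mathrm{near}}+J_{\mathrm{far}}$ using a smooth cutoff $\chi_R$ supported in $B(z_0,2R)$ and $\equiv 1$ on $B(z_0,R)$. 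The near piece is dominated trivially via the rearrangement inequality,
\[
|J_{\mathrm{near}}(z)|\le \|u\|_\infty\int_{|\zeta-z_0|\le 2R}\frac{d\mathrm{Re}\zeta\,d\mathrm{Im}\zeta}{|\bar z-\bar\zeta|}\le 4\pi R\|u\|_\infty=\frac{4\pi\|u\|_\infty}{|\lambda|^{1/2}}.
\]
For the far piece I would use $E=-\partial_{\bar\zeta}E/(2\bar\lambda(\bar\zeta-\bar z_0))$, valid on $\mathrm{supp}(1-\chi_R)$, and integrate by parts in $\bar\zeta$; this converts the oscillation into an overall factor $|\lambda|^{-1}$. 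Since $|\bar\zeta-\bar z_0|\ge R$ throughout the far region, the resulting volume integrals---where $\partial_{\bar\zeta}$ falls either on $u$ (using $\partial_{\bar z}u\in C(\bar D)$), on $\chi_R$ (producing a derivative of size $R^{-1}$ supported on the thin annulus $R\le|\zeta-z_0|\le 2R$), or on the Cauchy kernel---are each bounded by $C(D)|\lambda|^{-1/2}\|u\|_{C^1_{\bar z}}$ via direct two-dimensional area estimates.

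\smallskip
\noindent\textbf{Main obstacle.}  The most delicate points are (i) the boundary integral on $\partial D$ generated by the far-region integration by parts, whose modulus involves the naively unbounded $\int_{\partial D}|\bar z-\bar\zeta|^{-1}|d\zeta|$ as $z\to\partial D$, and (ii) the volume piece produced when $\partial_{\bar\zeta}$ hits $1/(\bar z-\bar\zeta)$, whose kernel contains the non-integrable factor $|\bar z-\bar\zeta|^{-2}$. For (i) the remedy is to treat the contribution as a Cauchy-type integral along the $C^2$-smooth curve $\partial D$ with a $C^1_{\bar z}$-density $E(\zeta)u(\zeta)/(\bar\zeta-\bar z_0)$, for which standard $L^\infty$ bounds on Cauchy integrals over smooth curves give $z$-uniform control by $C(D)|\lambda|^{-1}\|u\|_{C^1_{\bar z}}$. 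For (ii) one further splits the integration region according to $|\zeta-z|\lessgtr R$, using the estimate $|\bar z-\bar\zeta|^{-2}\le R^{-1}|\bar z-\bar\zeta|^{-1}$ on the outer part and the smallness of the support for the near-$z$ piece. The uniform dependence on $z_0\in\C$ is easier when $z_0\notin\bar D$ (a single integration by parts already gives the stronger $O(|\lambda|^{-1})$ bound) and, for $z_0$ close to $\partial D$, reduces to the main case by first extending $u$ as a $C^1_{\bar z}$-function to a neighborhood of $\bar D$ using the $C^2$-regularity of the boundary.
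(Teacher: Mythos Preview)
Your overall architecture coincides with the paper's: the factorization $g_{z_0,\lambda}=\tfrac14\,T\,\bar T_{z_0,\lambda}$ (your $J$ is exactly $\pi E\cdot\bar T_{z_0,\lambda}u$), the reduction of \eqref{est1}--\eqref{est2} to an $L^\infty$ bound on $J$ via the classical Cauchy/Beurling estimates for $T$, and the stationary-phase integration by parts in $\bar\zeta$ at scale $|\lambda|^{-1/2}$. The one tactical difference is decisive, however: the paper removes a ball around \emph{both} critical points, writing $J=I_{z_0,\lambda,\varepsilon}+J_{z_0,\lambda,\varepsilon}$ with the near region $D\cap(B_{z,\varepsilon}\cup B_{z_0,\varepsilon})$, whereas you cut off only at $z_0$. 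Excising $B_{z,\varepsilon}$ as well is precisely what makes obstacles (i) and (ii) disappear: after removal, the Cauchy kernel $(\bar z-\bar\zeta)^{-1}$ is smooth on the far region, the $\partial_{\bar\zeta}$-derivative never lands on it, and the boundary term on $\partial D$ is accompanied by a matching piece on $\partial B_{z,\varepsilon}$ that absorbs the logarithmic divergence.

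Your proposed fixes for (i) and (ii), as written, do not close. For (ii), the near-$z$ piece $\int_{|\zeta-z|<R}E(\zeta)u(\zeta)(\bar\zeta-\bar z_0)^{-1}(\bar z-\bar\zeta)^{-2}\,d^2\zeta$ is \emph{not} absolutely convergent: $\int_{|\zeta-z|<R}|\zeta-z|^{-2}\,d^2\zeta=\infty$, so ``smallness of the support'' gives nothing. One can only make sense of it as a principal value (the Beurling kernel), which requires a cancellation argument using H\"older regularity of $E u/(\bar\zeta-\bar z_0)$ at $\zeta=z$; but the H\"older constant of $E$ is of order $|\lambda|^\alpha$, so the best you recover this way is $O(|\lambda|^{-1+\alpha})$, not the $O(|\lambda|^{-1/2})$ you need unless you balance carefully. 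For (i), the same issue bites: the density $E(\zeta)u(\zeta)/(\bar\zeta-\bar z_0)$ has $C^1$ norm of order $|\lambda|$ (since $\partial_{\bar\zeta}E=-2\bar\lambda(\bar\zeta-\bar z_0)E$), so Plemelj--Privalov yields an $L^\infty$ bound on the Cauchy integral of order $|\lambda|^{\alpha}$, giving $O(|\lambda|^{\alpha-1})$ for the boundary term rather than the claimed $O(|\lambda|^{-1})$; and the naive absolute bound $\int_{\partial D}|z-\zeta|^{-1}|d\zeta|$ diverges as $z\to\partial D$, so you genuinely need the regularity. The cleanest repair is the paper's: replace your smooth cutoff at $z_0$ by hard excision of $B_{z,\varepsilon}\cup B_{z_0,\varepsilon}$ with $\varepsilon=|\lambda|^{-1/2}$; then all kernels in the volume term are at worst $|\zeta-z|^{-2}$ on a region where $|\zeta-z|\ge\varepsilon$, and the boundary contributions on $\partial D\setminus B_{z,\varepsilon}$ and on $\partial B_{z,\varepsilon}$ are each $O(\varepsilon^{-1})$, balancing the $|\lambda|^{-1}$ prefactor to give $|\lambda|^{-1/2}$.
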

Lemma \ref{lem1} is proved in section \ref{secpf}.

Given a potential $v \in C^1_{\bar z}(\bar D)$ we define the operator $g_{z_0,\lambda} v$ simply as $(g_{z_0,\lambda} v) u(z) = g_{z_0,\lambda} w(z), \; w=vu$, for a test function $u$. If $u \in C^1_{\bar z}(\bar D)$, by Lemma \ref{lem1} we have that $g_{z_0,\lambda} v : C^1_{\bar z}(\bar D)  \to C^1_{\bar z}(\bar D)$,
\begin{equation} \label{estsolmu}
\| g_{z_0,\lambda} v \|^{op}_{C^1_{\bar z}(\bar D)} \leq 2 \| g_{z_0,\lambda} \|^{op}_{C^1_{\bar z}(\bar D)} \|v\|_{C^1_{\bar z}(\bar D)},
\end{equation}
where $\| \cdot \|^{op}_{C^1_{\bar z}(\bar D)}$ denotes the operator norm in $C^1_{\bar z}(\bar D)$, $z_0, \lambda \in \C$. In addition, $\| g_{z_0,\lambda} \|^{op}_{C^1_{\bar z}(\bar D)}$ is estimated in Lemma \ref{lem1}. Inequality \eqref{estsolmu} and Lemma \ref{lem1} implies existence and uniqueness of $\mu_{z_0}(z, \lambda)$ (and thus also $\psi_{z_0}(z,\lambda)$) for $|\lambda|$ sufficiently large.
\smallskip

Let
\begin{align*}
\mu^{(k)}_{z_0}(z, \lambda) &= \sum_{j=0}^k (g_{z_0,\lambda} v)^j 1, \\
h^{(k)}_{z_0}(\lambda) &= \int_D e^{\lambda (z-z_0)^2 -\bar \lambda (\bar z- \bar z_0)^2}v(z) \mu^{(k)}_{z_0}(z,\lambda) d\Ree z \, d \Imm z,
\end{align*} 
where $z,z_0 \in D$, $\lambda \in \C$, $k \in \N \cup \{ 0 \}$.

\begin{lem} \label{lem2}
For $v \in C^1_{\bar z}(\bar D)$ such that $v|_{\partial D} = 0$ the following formula holds:
\begin{equation} \label{estp}
v(z_0) = \frac{2}{\pi} \lim_{\lambda \to \infty} |\lambda| h^{(0)}_{z_0}(\lambda), \; z_0 \in D.
\end{equation}
In addition, if $v \in C^2(\bar D)$, $v|_{\partial D}= 0$ and $\frac{\partial v}{\partial \nu}|_{\partial D} = 0$ then
\begin{equation} \label{estm}
|v(z_0) - \frac{2}{\pi}|\lambda| h^{(0)}_{z_0}(\lambda)| \leq c_3(D) \frac{\log(3|\lambda|)}{|\lambda|}\|v\|_{C^2(\bar D)},
\end{equation}
for $z_0 \in D$, $\lambda \in \C$, $|\lambda| \geq 1$.
\end{lem}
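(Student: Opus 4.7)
Since $\mu^{(0)}_{z_0}\equiv 1$ by definition, we have $h^{(0)}_{z_0}(\lambda)=\int_D v(z)\,e^{\lambda(z-z_0)^2-\bar\lambda(\bar z-\bar z_0)^2}\,d\Ree z\,d\Imm z$, an oscillatory integral whose phase $\phi_\lambda(\zeta)=2i\,\Imm(\lambda\zeta^2)$, with $\zeta=z-z_0$, is purely imaginary and has a single non-degenerate saddle critical point at $\zeta=0$ of Hessian signature $(1,1)$. After the rotation $\zeta=e^{-i(\arg\lambda)/2}w$ the phase takes the standard form $4i|\lambda|u_1u_2$, $w=u_1+iu_2$. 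My plan is a two-dimensional stationary phase expansion around this saddle, built on the identity
\[
e^{\phi_\lambda(\zeta)}=-\frac{1}{4|\lambda|^2|\zeta|^2}\,\partial_\zeta\partial_{\bar\zeta}e^{\phi_\lambda(\zeta)},\qquad\zeta\ne 0,
\]
together with the Fresnel evaluation $\int_{\R^2}e^{\phi_\lambda}\,d\zeta_R=\pi/(2|\lambda|)$ (a signature-$(1,1)$ Gaussian integral, justified by inserting a regularizer $e^{-\epsilon|\zeta|^2}$ and letting $\epsilon\to 0$, or by the diagonalization $\phi_\lambda=2i|\lambda|(p^2-q^2)$ and 1D Fresnel).

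For formula \eqref{estp}, I would first extend $v$ by zero to $\tilde v\in C^0_c(\R^2)$, which is allowed since $v|_{\partial D}=0$. After the rotation above and the natural rescaling $w=|\lambda|^{-1/2}w'$, the integrand $\tilde v(z_0+e^{-i(\arg\lambda)/2}|\lambda|^{-1/2}w')\,e^{4iu_1'u_2'}$ converges pointwise to $v(z_0)\,e^{4iu_1'u_2'}$ as $|\lambda|\to\infty$; dominated convergence (applied to a Gaussian-regularized version of the oscillatory integral) then gives $|\lambda|h^{(0)}_{z_0}(\lambda)\to v(z_0)\cdot\pi/2$.

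For the quantitative estimate \eqref{estm}, the added hypotheses $v\in C^2$ and $\partial_\nu v|_{\partial D}=0$ are precisely what ensures $\tilde v\in C^{1,1}(\R^2)$ with $\|\tilde v\|_{C^{1,1}}\le C(D)\|v\|_{C^2}$. I would apply the identity above and integrate by parts twice on $D\setminus B_\epsilon(z_0)$; since $v=\nabla v=0$ on $\partial D$, all $\partial D$ boundary contributions vanish. Passing to the limit $\epsilon\to 0$ produces a main term $\pi v(z_0)/(2|\lambda|)$---coming from the distributional delta-function part of $\partial_\zeta\partial_{\bar\zeta}(1/|\zeta|^2)$ at $\zeta=0$, captured as the $\epsilon\to 0$ limit of the $\{|\zeta|=\epsilon\}$ boundary contributions after Taylor expanding $v$ at $z_0$---plus a regularized volume integral $-(4|\lambda|^2)^{-1}\int_D\partial_\zeta\partial_{\bar\zeta}(v/|\zeta|^2)_{\mathrm{reg}}\,e^{\phi_\lambda}\,d\zeta_R$ in which the order-one Taylor polynomial of $v$ at $z_0$ has been subtracted from the singular pieces. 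To estimate this remainder I would split $D$ at $|\zeta|=R$ with $R=|\lambda|^{-1/2}$: on $B_R$, the Taylor expansion $v(z_0+\zeta)=v(z_0)+\nabla v(z_0)\cdot\zeta+O(\|v\|_{C^2}|\zeta|^2)$ together with the $\zeta\to-\zeta$ symmetry of both $B_R$ and $\phi_\lambda$ kills the linear term and leaves an $O(\|v\|_{C^2}R^4)=O(\|v\|_{C^2}/|\lambda|^2)$ contribution; on $D\setminus B_R$ the Taylor subtraction ensures the pointwise bound $|\partial_\zeta\partial_{\bar\zeta}(v/|\zeta|^2)_{\mathrm{reg}}|\le C\|v\|_{C^2}/|\zeta|^2$, whose integral over the annulus $\{R<|\zeta|<\mathrm{diam}\,D\}$ is $\le C\log|\lambda|$. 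Combining, the remainder is $O(\|v\|_{C^2}\log|\lambda|/|\lambda|^2)$; multiplying by $|\lambda|$ yields \eqref{estm}.

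The main obstacle is the delicate distributional bookkeeping near $\zeta=0$: one must carefully regularize $\partial_\zeta\partial_{\bar\zeta}(v/|\zeta|^2)$ so as to (i) isolate the Dirac-delta contribution that produces the main term $\pi v(z_0)/(2|\lambda|)$, and (ii) use the Taylor polynomial of $v$ at $z_0$ to tame the highly singular $|\zeta|^{-3}$ and $|\zeta|^{-4}$ pieces, leaving only the $|\zeta|^{-2}\|v\|_{C^2}$ part that---integrated from the natural oscillatory cutoff $R=|\lambda|^{-1/2}$---produces the logarithmic factor in the final bound.
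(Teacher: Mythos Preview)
Your outline has the right ingredients---stationary phase for the main term, integration by parts for decay, Taylor subtraction at $z_0$, and the hypotheses $v|_{\partial D}=\partial_\nu v|_{\partial D}=0$ to kill the $\partial D$ contributions---and matches the paper in spirit. The organization differs in one important respect, however. The paper separates the main term \emph{before} any integration by parts: it writes $h^{(0)}_{z_0}=S_{z_0,\delta}+R_{z_0,\delta}$, where $S_{z_0,\delta}=\int e_{\lambda,z_0}(z)\,v_{\mathrm{lin}}(z,z_0)\,\chi_\delta(z-z_0)\,d\Ree z\,d\Imm z$ for a fixed smooth cutoff $\chi_\delta$ and the first-order Taylor polynomial $v_{\mathrm{lin}}$; ordinary stationary phase on this smooth compactly supported integrand gives $\tfrac{2}{\pi}|\lambda|S_{z_0,\delta}=v(z_0)+O(|\lambda|^{-1})$ with no distributional subtleties. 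On the remainder $R_{z_0,\delta}$ (whose integrand already vanishes to second order at $z_0$) the paper integrates by parts using only the \emph{anti-holomorphic} identity $e_{\lambda,z_0}=-\bigl(2\bar\lambda(\bar z-\bar z_0)\bigr)^{-1}\partial_{\bar z}e_{\lambda,z_0}$: one such step, then a split at radius $\varepsilon$, then a second $\partial_{\bar z}$-step on the far piece, and finally $\varepsilon=|\lambda|^{-1}$. After the first step the integrand near $z_0$ equals $(v_{\bar z}-v_{\bar z}(z_0))/(\bar z-\bar z_0)-(v-v_{\mathrm{lin}})/(\bar z-\bar z_0)^2$, which is \emph{bounded}; the $B_\varepsilon$ piece is then $O(\varepsilon^2)$ by direct estimation.

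Your mixed $\partial_\zeta\partial_{\bar\zeta}$ route does not share this feature. Expanding $\partial_\zeta\partial_{\bar\zeta}\bigl(v/|\zeta|^2\bigr)$ produces, among other terms, $v_{\zeta\bar\zeta}(z_0+\zeta)/|\zeta|^2\sim\tfrac14\Delta v(z_0)/|\zeta|^2$, which survives any first-order Taylor regularization and has \emph{nonzero} angular mean; hence your ``regularized volume integral over $D$'' is not a convergent integral near $\zeta=0$ (not even as a principal value), and the limit $\epsilon\to0$ you invoke does not exist as written. Correspondingly, the $O(R^4)$ bound you quote on $B_R$ is not a bound on that integrand at all---it is a bound on the \emph{original} integral $\int_{B_R}(v-v_{\mathrm{lin}})\,e_{\lambda,z_0}$, a different object. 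The scheme can be repaired (keep $\epsilon=R$ finite and carry the $|\zeta|=R$ boundary terms explicitly; they are of order $|\lambda|^{-1}$ and must be combined with $v(z_0)\int_{B_R}e_{\lambda,z_0}$ to rebuild the Fresnel main term), but that bookkeeping is precisely the ``delicate'' part you flag and is not carried out. The paper's cutoff-plus-$\partial_{\bar z}$ organization sidesteps this difficulty entirely.
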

Lemma \ref{lem2} is proved in section \ref{secpf}.

Let
$$W_{z_0}(\lambda)=\int_D e^{\lambda(z-z_0)^2-\bar\lambda(\bar z-\bar z_0)^2}
w(z)d\Ree\,z d\Imm\,z,$$
where $z_0\in\bar D$, $\lambda\in\C$ and $w$ is some function on $\bar D$.
(One can see that $W_{z_0}=h_{z_0}^{(0)}$ for $w=v$.)

\begin{lem} \label{lem3}
For $w\in C_{\bar z}^1(\bar D)$ the following estimate holds:
\begin{subequations} \label{estw}
\begin{equation} 
|W_{z_0}(\lambda)|\le c_4(D) \frac{\log\,(3|\lambda|)}{ |\lambda|}  \|w\|_{C_{\bar z}^1(\bar D)},\ z_0\in\bar D,\ |\lambda|\ge 1,
\end{equation}
\begin{equation}
|W_{z_0}(\lambda)|\le c_{4,1}(D) \frac{\log\,(3|\lambda|)}{ |\lambda|}  \|w\|_{C(\bar D)}+\frac{c_{4,2}(D,p)}{|\lambda|}\| \frac{\partial}{\partial z} w\|_{L^p(\bar D)},
\end{equation}
for  $2 < p < \infty$.
\end{subequations}
\end{lem}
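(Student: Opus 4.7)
The plan is to rewrite the oscillating exponential as a complex derivative, integrate by parts on the complement of a small disc around the critical point $z_0$, and bound the contribution on the small disc directly. Setting $r=|\lambda|^{-1/2}$ balances the direct contribution $\sim r^{2}\|w\|_{C(\bar D)}$ from $B(z_0,r)$ against the $|\lambda|^{-1}$ gain afforded by one integration by parts. Since $|e^{\lambda(z-z_0)^2-\bar\lambda(\bar z-\bar z_0)^2}|=1$, the piece of the integral over $D\cap B(z_0,r)$ is immediately bounded by $\pi r^{2}\|w\|_{C(\bar D)}\le C|\lambda|^{-1}\|w\|_{C(\bar D)}$.

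Two identities drive the remainder of the argument, both valid for $z\ne z_0$:
\begin{equation*}
e^{\lambda(z-z_0)^2-\bar\lambda(\bar z-\bar z_0)^2}=-\frac{1}{2\bar\lambda(\bar z-\bar z_0)}\,\frac{\partial}{\partial\bar z}e^{\lambda(z-z_0)^2-\bar\lambda(\bar z-\bar z_0)^2}=\frac{1}{2\lambda(z-z_0)}\,\frac{\partial}{\partial z}e^{\lambda(z-z_0)^2-\bar\lambda(\bar z-\bar z_0)^2}.
\end{equation*}
For estimate (a) I would use the first identity and apply Green's formula on $D\setminus B(z_0,r)$, producing line integrals on $\partial D$ and on $\partial B(z_0,r)$ together with the bulk term
\begin{equation*}
\int_{D\setminus B(z_0,r)} e^{\lambda(z-z_0)^{2}-\bar\lambda(\bar z-\bar z_0)^{2}}\left[\frac{\partial_{\bar z}w}{2\bar\lambda(\bar z-\bar z_0)}-\frac{w}{2\bar\lambda(\bar z-\bar z_0)^{2}}\right]d\Ree z\,d\Imm z.
\end{equation*}
The $\partial_{\bar z}w$ piece is $O(|\lambda|^{-1}\|w\|_{C^{1}_{\bar z}(\bar D)})$ because $|z-z_{0}|^{-1}\in L^{1}(D)$ uniformly in $z_{0}$. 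The $(\bar z-\bar z_{0})^{-2}$ piece is the source of the logarithm, via $\int_{D\setminus B(z_0,r)}|z-z_{0}|^{-2}\,dA\le 2\pi\log(\mathrm{diam}(D)/r)=O(\log|\lambda|)$. On the small circle $\partial B(z_0,r)$, arclength $\sim r$ cancels $|\bar z-\bar z_{0}|=r$, giving $O(|\lambda|^{-1})\|w\|_{C(\bar D)}$; on $\partial D\setminus B(z_0,r)$ the $C^{2}$-regularity of $\partial D$ implies $\int_{\partial D\setminus B(z_0,r)}|z-z_{0}|^{-1}\,|dz|=O(\log(1/r))$, producing at worst another $O(|\lambda|^{-1}\log|\lambda|)\|w\|_{C(\bar D)}$ term.

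For estimate (b) I would repeat the scheme with the second identity, so that differentiation is now with respect to $z$ and $w/(2\lambda(z-z_{0}))$ is the factor being differentiated. The new feature is the term $\int e^{\lambda(z-z_{0})^{2}-\bar\lambda(\bar z-\bar z_{0})^{2}}(\partial_{z}w)/(2\lambda(z-z_{0}))\,dA$: by H\"older with dual exponent $q=p/(p-1)$, the hypothesis $p>2$ gives $q<2$ and hence $\|1/(z-z_{0})\|_{L^{q}(D)}$ is bounded by a constant depending only on $D$ and $p$. This produces the $c_{4,2}|\lambda|^{-1}\|\partial_{z}w\|_{L^{p}(\bar D)}$ contribution with \emph{no} logarithmic factor. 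The remaining $\partial_{z}(1/(z-z_{0}))=-1/(z-z_{0})^{2}$ term, the inner-disc contribution, and both boundary integrals are controlled exactly as in (a) and absorbed into the $c_{4,1}|\lambda|^{-1}\log(3|\lambda|)\|w\|_{C(\bar D)}$ piece.

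The principal technical nuisance is the case when $z_{0}$ lies on (or very close to) $\partial D$, in which case $B(z_{0},r)$ intersects $\partial D$ and the two boundary integrals must be analysed simultaneously. This is exactly where the $C^{2}$ hypothesis on $\partial D$ is used: near $z_{0}$ the arclength on $\partial D$ is comparable to Euclidean distance from $z_{0}$, so the relevant one-dimensional log-divergent integrals remain under control with constants depending only on $D$; all other estimates hold uniformly in $z_{0}\in\bar D$.
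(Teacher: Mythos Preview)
Your proposal is correct and follows essentially the same scheme as the paper's proof: split $W_{z_0}$ into the contribution from a small disc $B(z_0,\varepsilon)$ (bounded trivially since $|e_{\lambda,z_0}|=1$) and the contribution from $D\setminus B(z_0,\varepsilon)$, then integrate by parts once using the identity for the oscillatory exponential, yielding a boundary term on $\partial(D\setminus B(z_0,\varepsilon))$ and a bulk term, each of which carries the $|\lambda|^{-1}$ factor and at worst a $\log(1/\varepsilon)$ from the integrals of $|z-z_0|^{-1}$ and $|z-z_0|^{-2}$.

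Two minor differences are worth noting. First, the paper takes $\varepsilon=|\lambda|^{-1}$ rather than your $r=|\lambda|^{-1/2}$; both work, but $\varepsilon=|\lambda|^{-1}$ makes the $\log(3|\lambda|)$ appear directly as $\log(3\varepsilon^{-1})$, and the small-disc contribution becomes $O(|\lambda|^{-2})$, which is harmlessly absorbed. Second, the paper's written proof uses only the $\bar z$-derivative identity for both parts, so the $L^p$ term it actually writes is $\|\partial_{\bar z}w\|_{L^p}$ rather than the $\|\partial_z w\|_{L^p}$ appearing in the statement of (b); your choice of the $z$-derivative identity for part (b) matches the statement as written and is the natural way to reconcile the two.
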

Lemma \ref{lem3} is proved in Section 5.

\begin{lem} \label{lem4}
For $v \in C^1_{\bar z}(\bar D)$ and for $\|g_{z_0,\lambda} v \|^{op}_{C^1_{\bar z}(\bar D)} \leq \delta < 1$ we have that
\begin{align} \label{estmuk}
&\|\mu_{z_0}(\cdot, \lambda) - \mu_{z_0}^{(k)}(\cdot, \lambda)\|_{C^1_{\bar z}(\bar D)} \leq \frac{\delta^{k+1}}{1-\delta}, \\ \label{esth}
&|h_{z_0}(\lambda)-h^{(k)}_{z_0}(\lambda)| \leq c_4(D)\frac{\log(3|\lambda|)}{|\lambda|} \frac{\delta^{k+1}}{1-\delta} \|v\|_{C^1_{\bar z}(\bar D)},
\end{align}
where $z_0 \in D \setminus \{ 0 \}, \; \lambda \in \C,\; |\lambda |\ge 1, \; k \in \N \cup \{ 0 \}$.
\end{lem}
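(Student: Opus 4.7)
The plan is to exploit the Neumann-series structure that the contractive hypothesis $\|g_{z_0,\lambda} v\|^{op}_{C^1_{\bar z}(\bar D)} \leq \delta < 1$ imposes on equation \eqref{eq14}; both bounds then follow from straightforward bookkeeping on a geometric tail.

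For \eqref{estmuk}, I would first observe that under this hypothesis equation \eqref{eq14} is uniquely solvable in $C^1_{\bar z}(\bar D)$ by Neumann iteration, so that
\[
\mu_{z_0}(\cdot,\lambda) = \sum_{j=0}^\infty (g_{z_0,\lambda} v)^j 1,
\]
and the partial sum of the first $k+1$ terms is precisely $\mu_{z_0}^{(k)}(\cdot,\lambda)$. The remainder is then the tail $\sum_{j \geq k+1} (g_{z_0,\lambda} v)^j 1$; using $\|1\|_{C^1_{\bar z}(\bar D)} = 1$ and applying the operator-norm bound $j$ times, each summand has norm at most $\delta^j$, so the geometric series sums to $\delta^{k+1}/(1-\delta)$.

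For \eqref{esth}, the key observation is that
\[
h_{z_0}(\lambda) - h_{z_0}^{(k)}(\lambda) = W_{z_0}(\lambda) \quad \text{with} \quad w(z) := v(z)\bigl(\mu_{z_0}(z,\lambda) - \mu_{z_0}^{(k)}(z,\lambda)\bigr).
\]
I would then apply the first estimate of Lemma \ref{lem3} to this $W_{z_0}$, which reduces the problem to bounding $\|w\|_{C^1_{\bar z}(\bar D)}$. The Leibniz rule $\partial_{\bar z}(vu) = v\,\partial_{\bar z}u + u\,\partial_{\bar z}v$ gives $\|vu\|_{C^1_{\bar z}(\bar D)} \leq 2\|v\|_{C^1_{\bar z}(\bar D)}\|u\|_{C^1_{\bar z}(\bar D)}$, so combining with \eqref{estmuk} yields the claimed inequality, the factor $2$ being absorbed into $c_4(D)$ (possibly after slightly enlarging the constant of Lemma \ref{lem3}).

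I do not anticipate a genuine obstacle: the analytic work—the decay of $g_{z_0,\lambda}$ in $|\lambda|$ and the oscillatory-integral bound on $W_{z_0}$—has already been packaged into Lemmas \ref{lem1} and \ref{lem3}. The only point requiring a moment's care is the treatment of the multiplicative constant arising from the product rule for the $C^1_{\bar z}$ norm, and the verification that $1$ belongs to $C^1_{\bar z}(\bar D)$ with unit norm, both of which are immediate.
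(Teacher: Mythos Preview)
Your proposal is correct and follows exactly the route the paper takes: \eqref{estmuk} is obtained from the Neumann-series solution of \eqref{eq14} (what the paper calls ``the method of successive approximations''), and \eqref{esth} is deduced from \eqref{estmuk} via Lemma~\ref{lem3} applied to $w = v(\mu_{z_0}-\mu_{z_0}^{(k)})$. Your remark about the extra factor~$2$ from the product rule is apt; the paper's proof is a two-line sketch that does not track this constant explicitly, so your version is in fact slightly more careful.
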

Lemma \ref{lem4} is proved in section \ref{secpf}.

\section{Proof of Theorem \ref{maintheo}}

We start from Alessandrini's identity
\begin{align} \nonumber
\int_{D} (v_2 (z) - v_1(z)) \psi_2(z) &\psi_1 (z) d\Ree z \, d\Imm z \\ \nonumber 
&= \int_{\partial D} \int_{\partial D} \psi_1(z) (\Phi_2 - \Phi_1)(z, \zeta) \psi_2(\zeta) |d\zeta| |dz|,
\end{align}
which holds for every $\psi_j$ solution of $(-\Delta + v_j)\psi_j = 0$ on $D$, $j =1,2$. Here $(\Phi_2 - \Phi_1)(z, \zeta)$ is the kernel of the operator $\Phi_2 - \Phi_1$.

Let $\bar\mu_{z_0}$ denote the complex conjugated of $\mu_{z_0}$ for real-valued $v$ and, more generally, the solution of \eqref{eq14} with $g_{z_0}(z,\zeta,\lambda)$ replaced by $\overline{g_{z_0}(z,\zeta,\lambda)} $for complex-valued $v$.
Put $\psi_1(z) = \bar \psi_{1,z_0} (z, -\lambda) = e^{-\bar \lambda (\bar z - \bar z_0)^2} \bar \mu_1(z, -\lambda)$, $\psi_2(z) = \psi_{2,z_0} (z, \lambda) = e^{\lambda (z -  z_0)^2}  \mu_2(z, \lambda)$, where we called for simplicity $\bar \mu_1 = \bar \mu_{1,z_0}, \; \mu_2 = \mu_{2,z_0}$. This gives
\begin{align} \label{aless1}
\int_{D} &e_{\lambda, z_0} (z) (v_2 (z) - v_1(z)) \mu_2(z,\lambda) \bar \mu_1 (z,\lambda) d\Ree z \, d\Imm z \\ \nonumber 
= \int_{\partial D} &\int_{\partial D} e^{-\bar \lambda (\bar z - \bar z_0)^2} \bar \mu_1(z, -\lambda) (\Phi_2 - \Phi_1)(z, \zeta)  e^{\lambda (\zeta -  z_0)^2}  \mu_2(\zeta, \lambda) |d\zeta| |dz|,
\end{align}
where $e_{\lambda, z_0} (z) = e^{\lambda (z-z_0)^2 - \bar \lambda(\bar z- \bar z_0)^2}$. The left side $I (\lambda)$ of \eqref{aless1} can be written as the sum of four integrals, namely
\begin{align*}
I_1 (\lambda) &= \int_D e_{\lambda, z_0} (z) (v_2(z) - v_1(z)) d \Ree z \, d \Imm z, \\
I_2 (\lambda) &= -\int_D e_{\lambda, z_0} (z) (v_2(z) - v_1(z))(\mu_2 - 1) ( \bar \mu_1 -1) d \Ree z \, d \Imm z, \\
I_3 (\lambda) &= -I_2(\lambda) + \int_D e_{\lambda, z_0} (z) (v_2(z) - v_1(z))( \mu_2 -1) d \Ree z \, d \Imm z, \\
I_4 (\lambda) &= -I_2(\lambda) + \int_D e_{\lambda, z_0} (z) (v_2(z) - v_1(z))(\bar \mu_1 -1) d \Ree z \, d \Imm z,
\end{align*}
for $z_0 \in D$. By Lemma \ref{lem1}, \ref{lem2}, \ref{lem3}, \ref{lem4} we have the following estimates:
\begin{align} \label{est91}
&\left| \frac{2}{\pi}|\lambda| I_1 - (v_2(z_0) - v_1(z_0)) \right| \leq c_3( D) \frac{\log (3 |\lambda|)}{|\lambda|} \|v_2 - v_1\|_{C^2(\bar D)}, \\ \label{est92}
&|I_2| \leq c_5(D) \frac{\log(3 |\lambda|)}{|\lambda|^{\frac 3 2}} \|v_2 - v_1\|_{C^1(\bar D)} \|v_1\|_{C^1_{ z} (\bar D)} \|v_2\|_{C^1_{\bar z} (\bar D)}, \\ \label{est93}
&|I_3| \leq |I_2| + c_6(D)  \frac{\log(3 |\lambda|)}{|\lambda|^{\frac 3 2}} \|v_2 - v_1\|_{C^1_{\bar z} (\bar D)} \|v_2\|_{C^1_{\bar z} (\bar D)}, \\ \label{est94}
&|I_4| \leq |I_2| + c_6(D)  \frac{\log(3 |\lambda|)}{|\lambda|^{\frac 3 2}} \|v_2 - v_1\|^2_{C^1_{z} (\bar D)} \|v_1\|_{C^1_{z} (\bar D)},
\end{align}
for $|\lambda|$ sufficiently large for example, for $\lambda$ such that
\begin{align} \label{estlambda}
\frac{2\,c_1(D)}{ |\lambda|^{\frac 1 2}} \max\,(\|v_1\|_{C_{\bar z}^1(\bar D)},\|v_1\|_{C_{z}^1(\bar D)},\|v_2\|_{C_{\bar z}^1(\bar D)}, \|v_2\|_{C_{z}^1(\bar D)})\le \frac 1 2,\ |\lambda|\ge 1.
\end{align}
The right side $J(\lambda)$ of \eqref{aless1} can be estimated as follows:
\begin{align} \label{est95}
|\lambda||J(\lambda)| \leq c_7(D) e^{(2  L^2+1) |\lambda|} \|\Phi_2 - \Phi_1\|,
\end{align}
where we called $L = \max_{z \in \partial D, \; z_0 \in D} |z-z_0|$.

Putting together estimates \eqref{est91}-\eqref{est95} we obtain
\begin{align} \label{est96}
|v_2(z_0) - v_1(z_0)| &\leq c_8(D) \frac{\log(3 |\lambda|)}{|\lambda|^{\frac 1 2}}N^3 + \frac{2}{\pi}c_7(D) e^{(2  L^2+1) |\lambda|} \|\Phi_2 - \Phi_1\|
\end{align}
for $z_0 \in D$ and $N$ is the costant in the statement of Theorem \ref{maintheo}. We call $\varepsilon =  \|\Phi_2 - \Phi_1\|$ and impose $|\lambda| = \gamma \log (3+\varepsilon^{-1})$, where $0 < \gamma < (2L^2+1)^{-1}$ so that \eqref{est96} reads
\begin{align}
|v_2(z_0) - v_1(z_0)| &\leq c_8(D)N^3 (\gamma \log(3+\varepsilon^{-1}))^{- \frac 1 2} \log(3 \gamma \log(3+\varepsilon^{-1}))  \\ \nonumber
&+ \frac{2}{\pi}c_7(D) (3+ \varepsilon^{-1})^{(2  L^2+1)\gamma }\varepsilon,
\end{align}
for every $z_0 \in D$, with
\begin{equation} \label{esteps}
0 < \varepsilon \leq \varepsilon_1 (D, N, \gamma),
\end{equation}
where $\varepsilon_1$ is sufficiently small or, more precisely, where \eqref{esteps} implies that $|\lambda| =\gamma \log (3+ \varepsilon^{-1})$ satisfies \eqref{estlambda}.

As $(3+ \varepsilon^{-1})^{(2  L^2+1) \gamma} \varepsilon \to 0$ for $\varepsilon \to 0$ more rapidly then the other term, we obtain that
\begin{align} \label{estest}
\|v_2 -v_1\|_{L^{\infty}(D)} \leq c_{9}(D,N,\gamma) \frac{\log (3\log (3+\|\Phi_2 - \Phi_1\|^{-1}))}{(\log (3+ \|\Phi_2 - \Phi_1\|^{-1}))^{\frac 1 2 }} 
\end{align}
for $\varepsilon = \|\Phi_2 - \Phi_1\| \leq \varepsilon_1(D, N,\gamma)$.

Estimate \eqref{estest} for general $\varepsilon$ (with modified $c_{10}$) follows from \eqref{estest} for $\varepsilon \leq \varepsilon_1(D, N, \gamma)$ and the assumption that $\|v_j\|_{L^{\infty}(D)} \leq N, \; j = 1,2$. This completes the proof of Theorem \ref{maintheo}.

\section{Proofs of the Lemmata} \label{secpf}

\begin{proof}[Proof of Lemma \ref{lem1}]
One can see that $g_{z_0, \lambda} = \frac 1 4 T \bar T_{z_0, \lambda}$, for $z_0, \lambda \in \C$, where
\begin{align}
&T u(z) = - \frac{1}{\pi} \int_D \frac{u(\zeta)}{\zeta - z} d \Ree\zeta \, d \Imm \zeta, \\
&\bar T_{z_0, \lambda} u(z) = -\frac{e^{-\lambda(z- z_0)^2 + \bar \lambda(\bar z - \bar z_0)^2}}{\pi} \int_D \frac{e^{\lambda(\zeta-z_0)^2-\bar \lambda(\bar \zeta - \bar z_0)^2}}{\bar \zeta- \bar z} u(\zeta)  d \Ree\zeta \, d \Imm \zeta,
\end{align}
for $z \in \bar D$ and $u$ a test function. Estimates \eqref{est1}, \eqref{est2} now follow from
\begin{equation} \label{est31}
Tw \in C^1_{\bar z}(\bar D),
\end{equation}
\begin{subequations} \label{est32}
\begin{equation}
\| Tw \|_{C^1_{\bar z}(\bar D)} \leq n_1(D) \| w \|_{C(\bar D)}, \; \textrm{where } w \in C(D),
\end{equation}
\begin{equation}
\|\frac{\partial T}{\partial z} w\|_{L^p(\bar D)} \leq n(D,p) \|w\|_{L^p(\bar D)}, \; 1 < p < \infty,
\end{equation}
\end{subequations}
\begin{align} \label{est33}
&\bar T_{z_0,\lambda} u \in C(\bar D), \\ \label{est34}
&\|\bar T_{z_0, \lambda} u\|_{C(\bar D)} \leq \frac{n_2(D)}{|\lambda|^{\frac 1 2}} \|u \|_{C^1_{\bar z}(\bar D)}, \; |\lambda| \geq 1, \\ \label{est35}
&\|\bar T_{z_0, \lambda} u\|_{C(\bar D)} \leq \frac{\log (3 |\lambda|)(1+|z-z_0|)n_3(D)}{|\lambda| |z- z_0|^2}\|u\|_{C^1_{\bar z}(\bar D)}, \; |\lambda| \geq 1,
\end{align}
where $u \in C^1_{\bar z}(\bar D)$, $z_0, \lambda \in \C$. Estimates \eqref{est31}, \eqref{est32} are well-known (see \cite{Vekua}).

The assumption $u \in C^1_{\bar z}(\bar D)$ is not necessary at all for \eqref{est33}: indeed, using well-known arguments it is sufficient to take $u \in C(\bar D)$.

Let us prove \eqref{est34} and \eqref{est35}. We have that
$$- \pi e^{\lambda(z- z_0)^2 - \bar \lambda(\bar z - \bar z_0)^2} \bar T_{z_0,\lambda} u(z) = I_{z_0, \lambda, \varepsilon}(z) + J_{z_0,\lambda, \varepsilon}(z),$$
where
\begin{align}
I_{z_0, \lambda, \varepsilon} (z) &= \int_{D \cap (B_{z,\varepsilon} \cup B_{z_0, \varepsilon})} \! \! \! \! \! \! \! \! \! \! \! \!  \frac{e^{\lambda(\zeta -z_0)^2-\bar \lambda(\bar \zeta - \bar z_0)^2}}{\bar \zeta - \bar z} u(\zeta)  d \Ree\zeta \, d \Imm \zeta, \\
J_{z_0,\lambda, \varepsilon} (z) &= \int_{D_{z,z_0,\varepsilon}} \frac{e^{\lambda(\zeta -z_0)^2-\bar \lambda(\bar \zeta - \bar z_0)^2}}{\bar \zeta - \bar z} u(\zeta)  d \Ree\zeta \, d \Imm \zeta,
\end{align}
and $B_{z,\varepsilon} = \{ \zeta \in \C : |\zeta -z| < \varepsilon \}$, $D_{z,z_0,\varepsilon} = D \setminus (B_{z,\varepsilon} \cup B_{z_0, \varepsilon})$. One sees that
\begin{equation} \label{estI}
|I_{z_0,\lambda, \varepsilon}(z) | \leq 2 \int_{B_{z,\varepsilon}} \frac{\|u\|_{C(\bar D)}}{|\zeta - z|} d \Ree \zeta \, d \Imm \zeta = 4\pi \varepsilon \|u\|_{C(\bar D)}, 
\end{equation}
with $z,z_0,\lambda \in \C$, $\varepsilon >0$.
Further, we have that
\begin{align*} 
J_{z_0,\lambda,\varepsilon} (z) &= -\frac{1}{2 \bar \lambda} \int_{D_{z,z_0,\varepsilon}} \frac{\partial}{\partial \bar \zeta}\left( e^{\lambda(\zeta -z_0)^2-\bar \lambda(\bar \zeta - \bar z_0)^2} \right) \frac{u(\zeta)}{(\bar \zeta - \bar z)(\bar \zeta - \bar z_0)} d\Ree \zeta \, d\Imm \zeta \\
&= J^1_{z_0, \lambda, \varepsilon}(z) + J^2_{z_0, \lambda, \varepsilon}(z),
\end{align*}
where
\begin{align*}
&J^1_{z_0, \lambda, \varepsilon}(z) = -\frac{1}{4 i \bar \lambda} \int_{\partial D_{z,z_0,\varepsilon}} \frac{e^{\lambda(\zeta -z_0)^2-\bar \lambda(\bar \zeta - \bar z_0)^2} }{(\bar \zeta - \bar z)(\bar \zeta -\bar z_0)}  u(\zeta)d\zeta, \\
&J^2_{z_0, \lambda, \varepsilon}(z) =  \frac{1}{2 \bar \lambda} \int_{D_{z,z_0,\varepsilon}} \! \! \! \! \! \! \! \! \! e^{\lambda(\zeta -z_0)^2-\bar \lambda(\bar \zeta - \bar z_0)^2} \frac{\partial}{\partial \bar \zeta}\left( \frac{u(\zeta)}{(\bar \zeta - \bar z)(\bar \zeta - \bar z_0)}\right)  d\Ree \zeta \, d\Imm \zeta 
\end{align*}
Now we get
\begin{align} \nonumber
&|J^1_{z_0,\lambda, \varepsilon} (z)| \leq M^1_{z,z_0, \lambda, \varepsilon} := \frac{1}{4 |\lambda|} \int_{\partial D_{z,z_0,\varepsilon}} \frac{| u(\zeta) | |d \zeta|}{|\bar \zeta - \bar z||\bar \zeta - \bar z_0|}, \\ \label{est41}
&M^1_{z,z_0,\lambda, \varepsilon} \leq \frac{1}{8 |\lambda|}\int_{\partial D_{z,z_0,\varepsilon}} \left( \frac{1}{|\bar \zeta - \bar z|^2} + \frac{1}{|\bar \zeta - \bar z_0|^2} \right) |d\zeta| \|u\|_{C(D)}, \\ \label{est42}
&M^1_{z,z_0,\lambda, \varepsilon} \leq \frac{1}{2 |z-z_0| |\lambda|}\int_{\partial D_{z,z_0,\varepsilon}} \left( \frac{1}{|\bar \zeta - \bar z|} + \frac{1}{|\bar \zeta - \bar z_0|} \right) |d\zeta| \|u\|_{C(D)}.
\end{align}
We also have
\begin{align}\nonumber
|J^2_{z_0,\lambda, \varepsilon} (z)| \leq M^2_{z,z_0, \lambda, \varepsilon} :&= \frac{1}{2 |\lambda|} \int_{D_{z,z_0,\varepsilon}} \frac{| \frac{\partial u}{\partial \bar \zeta}(\zeta) |}{|\bar \zeta - \bar z||\bar \zeta -\bar z_0|} + \frac{| u(\zeta) |}{|\bar \zeta - \bar z|^2|\bar \zeta -\bar z_0|} \\ \nonumber
&\qquad + \frac{| u(\zeta) |}{|\bar \zeta - \bar z||\bar \zeta -\bar z_0|^2} d\Ree \zeta \, d\Imm \zeta,
\end{align}
\begin{align} \label{est43}
M^2_{z,z_0, \lambda, \varepsilon} &\leq \frac{1}{2 |\lambda|} \int_{D_{z,z_0,\varepsilon}} \frac{| \frac{\partial u}{\partial \bar \zeta}(\zeta) |}{|\bar \zeta - \bar z|^2} + \frac{| \frac{\partial u}{\partial \bar \zeta}(\zeta) |}{|\bar \zeta - \bar z_0|^2} + 2 \frac{| u(\zeta) |}{|\bar \zeta - \bar z|^3} \\ \nonumber &\qquad + 2\frac{| u(\zeta) |}{|\bar \zeta - \bar z_0|^3} d\Ree \zeta \, d\Imm \zeta
\end{align}
\begin{align} \label{est44}
M^2_{z,z_0, \lambda, \varepsilon} \leq \frac{1}{2 |\lambda|} \int_{D_{z,z_0,\varepsilon}} \frac{2|\frac{\partial u}{\partial \bar \zeta}(\zeta) |}{|\bar \zeta - \bar z||z-z_0|} + \frac{2| \frac{\partial u}{\partial \bar \zeta}(\zeta) |}{|\bar \zeta -\bar z_0||z-z_0|} +  \frac{2| u(\zeta) |}{|\bar \zeta - \bar z|^2 |z-z_0|} \\ \nonumber +  \frac{4| u(\zeta) |}{|\bar \zeta - \bar z||z- z_0|^2} +  \frac{2| u(\zeta) |}{|\bar \zeta - \bar z_0|^2|z-z_0|} +  \frac{4| u(\zeta) |}{|\bar \zeta - \bar z_0||z-z_0|^2} d\Ree \zeta \, d\Imm \zeta.
\end{align}
Using \eqref{est41} and \eqref{est43} we obtain that
\begin{align} \label{est51}
&|J^1_{z_0,\lambda, \varepsilon}(z)| \leq |\lambda|^{-1} n_4(D) \varepsilon^{-1} \|u\|_{C(D)}, \\ \label{est52}
&|J^2_{z_0,\lambda, \varepsilon}(z)| \leq |\lambda|^{-1} n_5(D) \varepsilon^{-1} \|u\|_{C(D)} + |\lambda|^{-1} n_6(D)\log(3 \varepsilon^{-1}) \|\frac{\partial u}{\partial \bar z}\|_{C(D)},
\end{align}
where $z,z_0,\lambda \in \C$, $|\lambda| \geq 1$, $0 < \varepsilon < 1$.

If $z_0 \neq z$ we can use \eqref{est42} and \eqref{est44} in order to obtain
\begin{align} \label{est53}
|J^1_{z_0,\lambda, \varepsilon}(z)| &\leq |\lambda|^{-1} |z-z_0|^{-1} n_7(D) \log(3 \varepsilon^{-1}) \|u\|_{C(D)}, \\ \label{est54}
|J^2_{z_0,\lambda, \varepsilon}(z)| &\leq |\lambda|^{-1} |z-z_0|^{-2} n_8(D) \log(3 \varepsilon^{-1}) \|u\|_{C(D)} \\ \nonumber
&+ |\lambda|^{-1} |z-z_0|^{-1}n_9(D) \|\frac{\partial u}{\partial \bar z}\|_{C(D)},
\end{align}

Finally, putting $\varepsilon = |\lambda|^{-\frac 1 2}$ into \eqref{estI}, \eqref{est51}, \eqref{est52} we obtain \eqref{est34}, while putting $\varepsilon = |\lambda|^{-1}$ into \eqref{estI}, \eqref{est53}, \eqref{est54} we obtain \eqref{est35}. The proof follows.
\end{proof}

\begin{proof}[Proof of Lemma \ref{lem2}]
First we extend our potential $v$ to a larger domain $D_1 \supset D$ (always with $C^2$ boundary) such that $dist(\partial D_1, \partial D) \ge \delta > 0 $ (for some $\delta$) by putting $v|_{D_1 \setminus D} \equiv 0$. In such a way $v \in C^1(D_1) \cap C^2(D_1 \setminus \partial D)$ with $\|v\|_{C^{k}(D_1)} = \|v\|_{C^{k}(D)}$ for $k=1,2$.

Now let $\chi_{\delta}$ be a real-valued function on $\C$, with $\delta > 0$, constructed as follows:
$$
\begin{array}{l}
\chi_{\delta} (z) = \chi(z / \delta), \textrm{ where} \\
\chi \in C^{\infty}(\C), \; \chi \textrm{ is real valued}, \\
\chi(z) = \chi(|z|), \\
\chi(z) \equiv 1 \textrm{ for } |z| \leq 1/2, \\
\chi(z) \equiv 0 \textrm{ for } |z| \geq 1.
\end{array}
$$

Let 
$$v_{lin} (z,z_0) = v(z_0) + v_z (z_0)(z-z_0) + v_{\bar z}(z_0) (\bar z - \bar z_0),$$
for $z, z_0 \in D_1$, $v_z = \frac{\partial v}{\partial z}$ and $v_{\bar z} = \frac{\partial v}{\partial \bar z}$.

We can write $h^{(0)}_{z_0}(\lambda) = S_{z_0, \delta} (\lambda) + R_{z_0,\delta} (\lambda)$, where
\begin{align*}
S_{z_0,\delta} (\lambda) &= \int_{\C} e^{\lambda (z-z_0)^2 - \bar \lambda (\bar z- \bar z_0)^2} v_{lin} (z,z_0) \chi_{\delta} (z-z_0) d\Ree z \, d \Imm z \\
&= \int_{\C} e^{i |\lambda|(z^2 + \bar z^2)} v_{lin} (e^{-i\varphi (\lambda)}z +z_0, z_0) \chi_{\delta} (z) d\Ree z \, d \Imm z, \\
R_{z_0, \delta}(\lambda) &= \int_{D_1} e^{\lambda (z-z_0)^2 - \bar \lambda (\bar z- \bar z_0)^2} \left(v(z) -v_{lin} (z,z_0) \chi_{\delta} (z-z_0)\right) d\Ree z \, d \Imm z
\end{align*}
where $\varphi(\lambda) = \frac 1 2 (\mathrm{arg}(\lambda) - \frac{\pi}{2})$, $z_0 \in D$, $\lambda \in \C$.

Using the stationary phase method we obtain that
\begin{align} \label{est65}
&v(z_0) = \frac{2}{\pi} \lim_{\lambda \to \infty} |\lambda| S_{z_0, \delta} (\lambda), \\ \label{est66}
&|v(z_0) - \frac{2}{\pi} |\lambda| S_{z_0,\delta} (\lambda)| \leq q_1(D, \delta) \|v\|_{C^1(\bar D)} |\lambda|^{-1},
\end{align}
$z_0 \in D$, $\delta >0$, $\lambda \in \C$, $|\lambda| \geq 1$.
Integrating by parts we can write
\begin{align*}
R_{z_0, \delta}(\lambda) &= -\frac{1}{2 \bar \lambda} \int_{D_1} \frac{\partial}{\partial \bar z}\left( e^{\lambda (z-z_0)^2 - \bar \lambda (\bar z- \bar z_0)^2} \right) \\
&\times \frac{\left(v(z) -v_{lin} (z,z_0) \chi_{\delta} (z-z_0)\right)}{\bar z - \bar z_0} d\Ree z \, d \Imm z = R^1_{z_0, \delta}(\lambda) + R^2_{z_0, \delta}(\lambda), \\
R^1_{z_0, \delta}(\lambda) &= \frac{-1}{4i \bar \lambda} \int_{\partial {D_1}} \! \! \! \! e^{\lambda (z-z_0)^2 - \bar \lambda (\bar z- \bar z_0)^2}  \frac{\left(v(z) -v_{lin} (z,z_0) \chi_{\delta} (z-z_0)\right)}{\bar z - \bar z_0} dz, \\
R^2_{z_0, \delta}(\lambda) &= \frac{1}{2 \bar \lambda} \int_{D_1} e^{\lambda (z-z_0)^2 - \bar \lambda (\bar z- \bar z_0)^2} \\
&\times \frac{\partial}{\partial \bar z} \left(\frac{\left(v(z) -v_{lin} (z,z_0) \chi_{\delta} (z-z_0)\right)}{\bar z - \bar z_0} \right) d\Ree z \, d \Imm z,
\end{align*}
for $z_0 \in D$, $\lambda \in \C \setminus \{ 0 \}$. In addition, we have that
\begin{align} \label{est61}
\lim_{\lambda \to \infty} |\lambda| R^1_{z_0,\delta} (\lambda) =0, \\ \label{est62}
\lim_{\lambda \to \infty} |\lambda| R^2_{z_0,\delta} (\lambda) =0.
\end{align}

Formula \eqref{est61} follows from properties of $\chi_{\delta}$, the assumption that $z_0 \in D$ and that $v|_{\partial D_1} \equiv 0$. Actually, as a corollary of this properties we have that $v(z) - v_{lin}(z,z_0)\chi_{\delta}(z-z_0) \equiv 0$ for $z \in \partial D_1$ and, therefore, $R^1_{z_0,\delta} (\lambda) \equiv 0$ for $\lambda \in \C \setminus \{ 0 \}$.

Formula \eqref{est62} for $v \in C^1(\bar D_1)$ is a consequence of the estimates
\begin{align} \label{est63}
&R^{2,1}_{z_0, \delta, \varepsilon}(\lambda) := \int_{B_{z_0,\varepsilon}} e^{\lambda (z-z_0)^2 - \bar \lambda (\bar z- \bar z_0)^2} \\ \nonumber
&\times \frac{\partial}{\partial \bar z} \left(\frac{\left(v(z) -v_{lin} (z,z_0) \chi_{\delta} (z-z_0)\right)}{\bar z - \bar z_0} \right) d\Ree z \, d \Imm z = O (\varepsilon) \; \textrm{as } \varepsilon \to 0 \\ \label{est64}
&R^{2,2}_{z_0, \delta, \varepsilon}(\lambda) := \int_{D_{z_0,\varepsilon}} e^{\lambda (z-z_0)^2 - \bar \lambda (\bar z- \bar z_0)^2} \\ \nonumber
&\times \frac{\partial}{\partial \bar z} \left(\frac{\left(v(z) -v_{lin} (z,z_0) \chi_{\delta} (z-z_0)\right)}{\bar z - \bar z_0} \right) d\Ree z \, d \Imm z \to 0 \; \textrm{as } \lambda \to \infty
\end{align}
where $B_{z_0, \varepsilon} = \{ z \in \C : |z-z_0| < \varepsilon \}$, $D_{z_0,\varepsilon} = D_1 \setminus B_{z_0, \varepsilon}$. In \eqref{est63}-\eqref{est64} we assume that $z_0 \in D, \; 0 < \varepsilon < \delta, \; \lambda \in \C$.

Estimate \eqref{est63} is obtained by standard arguments using that
$$|v(z) - v(z_0) | \leq \|v\|_{C^1(\bar D)} |z-z_0|, \; z_0 \in D, \; z \in B_{z_0,\delta},$$ 
while \eqref{est64} is a variation of the Riemann-Lebesgue Lemma.

Formula \eqref{estp} now follows from \eqref{est65}, \eqref{est61}, \eqref{est62}.

Under the assumptions mentioned in Lemma \ref{lem2}, the final part of the proof of estimate \eqref{estm} consists in the following. We have, for $\varepsilon < \delta /2$,
\begin{align} \label{est67}
|R^{2,1}_{z_0, \delta, \varepsilon}(\lambda)| &\leq \int_{B_{z_0, \varepsilon}} \frac{|v(z) - v_{lin} (z,z_0)|}{|z-z_0|^2} d\Ree z \, d \Imm z \\ \nonumber
&+ \int_{B_{z_0, \varepsilon}} \frac{|v_{\bar z}(z) - v_{\bar z} (z_0)|}{|z-z_0|} d\Ree z \, d \Imm z \leq \frac 7 2 \pi \|v\|_{C^2(\bar D)} \varepsilon^2, \\ \nonumber
R^{2,2}_{z_0, \delta, \varepsilon}(\lambda) &= \frac{-1}{2 \bar \lambda} \int_{D_{z_0,\varepsilon}} \frac{\partial}{\partial \bar z} \left( e^{\lambda (z-z_0)^2 - \bar \lambda (\bar z- \bar z_0)^2} \right) \frac{1}{\bar z - \bar z_0} \\ \nonumber
&\times \frac{\partial}{\partial \bar z} \left(\frac{\left(v(z) -v_{lin} (z,z_0) \chi_{\delta} (z-z_0)\right)}{\bar z - \bar z_0} \right) d\Ree z \, d \Imm z \\ \nonumber
&= \frac{-1}{2 \bar \lambda}(R^{2,2,1}_{z_0, \delta, \varepsilon}(\lambda)+ R^{2,2,2}_{z_0, \delta, \varepsilon}(\lambda)), \\ \nonumber
R^{2,2,1}_{z_0, \delta, \varepsilon}(\lambda) &= \frac{1}{2 i} \int_{\partial D_{z_0,\varepsilon}} e^{\lambda (z-z_0)^2 - \bar \lambda (\bar z- \bar z_0)^2} \frac{1}{\bar z - \bar z_0} \\ \nonumber
&\times \frac{\partial}{\partial \bar z} \left(\frac{\left(v(z) -v_{lin} (z,z_0) \chi_{\delta} (z-z_0)\right)}{\bar z - \bar z_0} \right) dz \\ \nonumber
= \frac{-1}{2 i} \int_{\partial B_{z_0,\varepsilon}} &e^{\lambda (z-z_0)^2 - \bar \lambda (\bar z- \bar z_0)^2} \frac{1}{\bar z - \bar z_0} \frac{\partial}{\partial \bar z} \left(\frac{v(z) -v_{lin} (z,z_0)}{\bar z - \bar z_0} \right) dz,
\end{align}
where we used in particular that $v|_{\partial D_1} \equiv 0, \frac{\partial}{\partial \nu} v|_{\partial D_1} \equiv 0$,
\begin{align*}
R^{2,2,2}_{z_0, \delta, \varepsilon}(\lambda) &= -\int_{D_{z_0,\varepsilon}}  e^{\lambda (z-z_0)^2 - \bar \lambda (\bar z- \bar z_0)^2} \\
&\times \frac{\partial}{\partial \bar z} \left( \frac{1}{\bar z - \bar z_0} \frac{\partial}{\partial \bar z} \left(\frac{\left(v(z) -v_{lin} (z,z_0) \chi_{\delta} (z-z_0)\right)}{\bar z - \bar z_0} \right)  \right) d\Ree z \, d \Imm z.
\end{align*}
We have, for $\varepsilon < \delta /2$
\begin{align} \label{est68}
|R^{2,2,1}_{z_0, \delta, \varepsilon}(\lambda)| &\leq \frac{1}{2} \int_{\partial B_{z_0,\varepsilon}} \frac{|v(z)-v_{lin}(z,z_0)|}{|z-z_0|^3}|dz| \\ \nonumber
&+ \frac{1}{2} \int_{\partial B_{z_0,\varepsilon}} \frac{|v_{\bar z}(z)-v_{\bar z}(z_0)|}{|z-z_0|^2}|dz| \leq \frac 7 2 \pi \|v\|_{C^2(\bar D)}, \\
|R^{2,2,2}_{z_0, \delta, \varepsilon}(\lambda)| &\leq |R^{2,2,2}_{z_0, \delta, \delta/2}(\lambda)| + |R^{2,2,2}_{z_0, \delta, \varepsilon}(\lambda) - R^{2,2,2}_{z_0, \delta, \delta/2}(\lambda)|, \\
|R^{2,2,2}_{z_0, \delta, \delta/2}(\lambda)| &\leq q_2(D,\delta) \|v\|_{C^2(\bar D)},\\ \nonumber
|R^{2,2,2}_{z_0, \delta, \varepsilon}(\lambda) - &R^{2,2,2}_{z_0, \delta, \delta/2}(\lambda)| \leq \sum_{j=1}^5 \int_{B_{z_0, \delta/2} \setminus B_{z_0, \varepsilon}} \! \! \! \! \! \! \! \! \! \! \! \! \! \! \! \! \! \! \! \! \! \! \! \! \! u_j (z,z_0) d \Ree z \, d \Imm z,
\end{align}
with
\begin{align} \label{est80}
u_1(z,z_0) &= \frac{1}{|z-z_0|^2} \left| \frac{v_{\bar z}(z) - v_{\bar z}(z_0)}{\bar z - \bar z_0} \right|, \\ \label{est81}
u_2(z,z_0) &= \frac{1}{|z-z_0|^2} \left| \frac{v(z) - v_{lin}(z,z_0)}{(\bar z - \bar z_0)^2} \right|,  \\ \label{est82}
u_3(z,z_0) &= \frac{1}{|z-z_0|} \left| \frac{v_{\bar z \bar z}(z)}{\bar z - \bar z_0}\right|,  \\ \label{est83}
u_4(z,z_0) &= \frac{2}{|z-z_0|} \left| \frac{v_{\bar z}(z) - v_{\bar z}(z_0)}{(\bar z - \bar z_0)^2}\right|,  \\ \label{est84}
u_5(z,z_0) &= \frac{2}{|z-z_0|} \left| \frac{v(z) - v_{lin}(z,z_0)}{(\bar z - \bar z_0)^3}\right|.
\end{align}
This yields
\begin{equation} \label{est69}
|R^{2,2,2}_{z_0, \delta, \varepsilon}(\lambda) - R^{2,2,2}_{z_0, \delta, \delta/2}(\lambda)| \leq q_3 \log(\frac{\delta}{2 \varepsilon}) \| v \|_{C^2(\bar D)},
\end{equation}
where $z_0 \in D$, $0 < \varepsilon < \delta/2$. $\lambda \in \C \setminus \{ 0 \}$. Using \eqref{est66}, \eqref{est67}-\eqref{est69} with $\varepsilon = |\lambda|^{-1}$ we obtain \eqref{estm}. Lemma \ref{lem2} is proved.
\end{proof}

\begin{proof}[Proof of Lemma \ref{lem3}]
We write
\begin{align*}
W_{z_0} (\lambda) &= W^1_{z_0, \varepsilon} (\lambda) + W^2_{z_0, \varepsilon} (\lambda), \\
W^1_{z_0, \varepsilon} (\lambda) &= \int_{D \cap B_{z_0,\varepsilon}} e^{\lambda (z-z_0)^2 - \bar \lambda (\bar z - \bar z_0)^2} w(z) d \Ree z \, d \Imm z, \\
W^2_{z_0, \varepsilon} (\lambda) &= \int_{D \setminus B_{z_0,\varepsilon}} e^{\lambda (z-z_0)^2 - \bar \lambda (\bar z - \bar z_0)^2} w(z) d \Ree z \, d \Imm z,
\end{align*}
where $B_{z_0, \varepsilon} = \{ z \in \C : |z-z_0| < \varepsilon \}$. One sees that
\begin{align} \label{est73}
|W^1_{z_0, \varepsilon} (\lambda)| &\leq \int_{D \cap B_{z_0,\varepsilon}} \| w \|_{C(D)} d \Ree z \, d \Imm z = \pi \| w \|_{C(D)} \varepsilon^2, \\ \nonumber
W^2_{z_0, \varepsilon} (\lambda) &= \frac{-1}{2 \bar \lambda}\int_{D \setminus B_{z_0,\varepsilon}}  \frac{\partial}{\partial \bar z} \left(e^{\lambda (z-z_0)^2 - \bar \lambda (\bar z - \bar z_0)^2} \right) \frac{w(z)}{\bar z - \bar z_0} d \Ree z \, d \Imm z \\ \nonumber
&= W^{2,1}_{z_0, \varepsilon} (\lambda)+ W^{2,2}_{z_0, \varepsilon} (\lambda), \\ \nonumber
W^{2,1}_{z_0, \varepsilon} (\lambda) &= \frac{-1}{4 i \bar \lambda}\int_{\partial(D \setminus B_{z_0,\varepsilon})} e^{\lambda (z-z_0)^2 - \bar \lambda (\bar z - \bar z_0)^2} \frac{w(z)}{\bar z - \bar z_0} dz, \\ \nonumber
W^{2,2}_{z_0, \varepsilon} (\lambda) &= \frac{1}{2 \bar \lambda}\int_{D \setminus B_{z_0,\varepsilon}}  e^{\lambda (z-z_0)^2 - \bar \lambda (\bar z - \bar z_0)^2} \frac{\partial}{\partial \bar z} \left(\frac{w(z)}{\bar z - \bar z_0} \right) d \Ree z \, d \Imm z.
\end{align}
We have
\begin{align} \label{est71}
|W^{2,1}_{z_0, \varepsilon} (\lambda) | &\leq |\lambda|^{-1} a_1(D) \|w\|_{C(\bar D)} \log(3 \varepsilon^{-1}), 
\end{align}
\begin{subequations} \label{est72}
\begin{align}
|W^{2,2}_{z_0, \varepsilon} (\lambda) | &\leq |\lambda|^{-1} a_2(D) \|w\|_{C^1_{\bar z}(\bar D)} \log(3 \varepsilon^{-1}) \\
|W^{2,2}_{z_0, \varepsilon} (\lambda) | &\leq |\lambda|^{-1} a_2(D) \|w\|_{C(\bar D)} \log(3 \varepsilon^{-1}) \\ \notag
&\qquad+ |\lambda|^{-1} a_3(D,p)\|\frac{\partial w}{\partial \bar z}\|_{L^p(\bar D)},
\end{align}
\end{subequations}
for $z_0 \in D, \; \lambda \in \C \setminus \{ 0 \}, \; 0< \varepsilon \leq 1 , \; 2 < p <\infty$.

Using \eqref{est73}, \eqref{est71}, \eqref{est72} with $\varepsilon = |\lambda|^{-1}$ we obtain \eqref{estw}. This finishes the proof.
\end{proof}

\begin{proof}[Proof of Lemma \ref{lem4}]
Formula \eqref{estmuk} follows from the assumption on $\|g_{z_0,\lambda} v\|$ and from solving \eqref{eq14} by the method of successive approximations. \linebreak The proof of estimate \eqref{esth} follows from \eqref{estmuk} and Lemma \ref{lem3}. The proof follows.
\end{proof}

\section{An extension of Theorem \ref{maintheo}} \label{sec6}

As an extension of Theorem \ref{maintheo} for the case when we do not assume that $v_j|_{\partial D} \equiv 0, \; \frac{\partial}{\partial \nu} v_j|_{\partial D} \equiv 0, \; j= 1,2$, we give the following result.

\begin{prop} \label{mainprop}
Let $D \subset \R^2$ be an open bounded domain with $C^2$ boundary, let $v_1 , v_2 \in C^2(\bar D)$ with $\|v_j \|_{C^2(\bar D)} \le N$ for $j =1,2$, and $\Phi_1 , \Phi_2$ the corresponding Dirichlet-to-Neumann operators. Then, for any $0 < \alpha < \frac 1 5$,  there exists a constant $C = C(D, N, \alpha)$ such that the following inequality holds
\begin{equation} \label{estweak}
\|v_2 - v_1\|_{L^{\infty}(D)} \leq C \log(3 + \|\Phi_2 - \Phi_1\|_1^{-1} )^{-\alpha},
\end{equation}
where $\| A\|_1$ is the norm for an operator $A : L^{\infty}(\partial D) \to L^{\infty}(\partial D)$, with kernel $A(x,y)$, defined as $\|A\|_1 = \sup_{x,y \in \partial D} |A(x,y)| (\log(3 + |x-y|^{-1}))^{-1}$.
\end{prop}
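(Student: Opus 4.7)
The plan is to reduce Proposition \ref{mainprop} to Theorem \ref{maintheo} by passing to a slightly larger domain on which the potentials satisfy the vanishing-Cauchy-data hypotheses used in the main theorem. Pick $\eta>0$ small (to be optimised later) and choose an auxiliary $C^2$-domain $\tilde D\supset\bar D$ with $\mathrm{dist}(\partial\tilde D,\partial D)=\eta$. Using a standard $C^2$-extension operator followed by a smooth cutoff supported in a collar of $\partial\tilde D$, construct $\tilde v_j\in C^2(\bar{\tilde D})$ satisfying $\tilde v_j|_D=v_j$, $\tilde v_j|_{\partial\tilde D}=0$, $\frac{\partial\tilde v_j}{\partial\nu}|_{\partial\tilde D}=0$, and $\|\tilde v_j\|_{C^2(\bar{\tilde D})}\le C(\eta)N$. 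Then Theorem \ref{maintheo} applied on $\tilde D$ to $(\tilde v_1,\tilde v_2)$ gives
\begin{equation*}
\|v_1-v_2\|_{L^\infty(D)}\le\|\tilde v_1-\tilde v_2\|_{L^\infty(\tilde D)}\le C\bigl(\log(3+\|\tilde\Phi_1-\tilde\Phi_2\|^{-1})\bigr)^{-1/2}\log\bigl(3\log(3+\|\tilde\Phi_1-\tilde\Phi_2\|^{-1})\bigr),
\end{equation*}
where $\tilde\Phi_j$ denotes the Dirichlet-to-Neumann map of $-\Delta+\tilde v_j$ on $\partial\tilde D$. The remaining task is to bound $\|\tilde\Phi_1-\tilde\Phi_2\|$ by $\|\Phi_1-\Phi_2\|_1$.

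This comparison proceeds in two substeps. First, invoke a classical boundary-determination stability result for the Gel'fand–Calder\'on problem: under the $C^2$-regularity assumption one recovers $v_j|_{\partial D}$ and $\partial_\nu v_j|_{\partial D}$ from $\Phi_j$ with a H\"older-type stability of the form
\begin{equation*}
\|v_1-v_2\|_{L^\infty(\partial D)}+\|\partial_\nu(v_1-v_2)\|_{L^\infty(\partial D)}\le c\,\|\Phi_1-\Phi_2\|_1^{\beta},
\end{equation*}
for some $\beta\in(0,1)$, and the pointwise kernel norm $\|\cdot\|_1$ is precisely the one adapted to such bounds. Combined with $\|v_j\|_{C^2(\bar D)}\le N$, this lets one construct the extensions $\tilde v_j$ so that $\|\tilde v_1-\tilde v_2\|_{L^\infty(\tilde D\setminus D)}\le c\,\|\Phi_1-\Phi_2\|_1^{\beta}+cN\eta^2$. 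Second, given Dirichlet data on $\partial\tilde D$, propagate the solution of $(-\Delta+\tilde v_j)u=0$ through the thin annulus $\tilde D\setminus D$ and close the argument on $\partial D$ using $\Phi_j$; uniform elliptic estimates on $\tilde D\setminus D$ then yield
\begin{equation*}
\|\tilde\Phi_1-\tilde\Phi_2\|\le C(\eta)\bigl(\|\Phi_1-\Phi_2\|_1+\|\tilde v_1-\tilde v_2\|_{L^\infty(\tilde D\setminus D)}\bigr).
\end{equation*}

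Inserting these estimates into the Theorem \ref{maintheo} bound and optimising $\eta$ as a suitable fractional negative power of $\log(3+\|\Phi_1-\Phi_2\|_1^{-1})$ balances the boundary-layer term $N\eta^2$ against the logarithmic factor, absorbs the $\log\log$ correction, and produces \eqref{estweak} for every $\alpha<1/5$; the threshold $1/5$ reflects the combined loss coming from the exponent $1/2$ in Theorem \ref{maintheo}, the H\"older exponent $\beta$ of the boundary determination, and the $\eta$-dependence of the constant $C(\eta)$. The main obstacle is precisely this quantitative comparison of Dirichlet-to-Neumann maps on nested domains: one needs kernel estimates for the solution operator on the collar $\tilde D\setminus D$ that are uniform in $\eta$ and compatible with the logarithmic weight in the definition of $\|\cdot\|_1$, which is exactly what prevents one from recovering the sharper exponent $1/2$ of Theorem \ref{maintheo}.
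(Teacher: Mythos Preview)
Your strategy---extend to a larger domain $\tilde D$, apply Theorem~\ref{maintheo} there, and then transfer the estimate back by comparing $\tilde\Phi_j$ with $\Phi_j$---is genuinely different from the paper's argument. The paper never leaves $D$ and never compares Dirichlet-to-Neumann maps on nested domains. Instead it proves a variant of Lemma~\ref{lem2} (Lemma~\ref{newlem2}) valid for $z_0\in D\setminus(\partial D)_\delta$ without any hypothesis on $v|_{\partial D}$, at the cost of a factor $\delta^{-4}$ and an extra term $\log(3+\delta^{-1})\|v\|_{C(\partial D)}$. It then splits $D$ into the boundary layer $(\partial D)_\delta\cap D$, where the $C^1$ bound gives $|v_1-v_2|\le 2N\delta+\|v_1-v_2\|_{L^\infty(\partial D)}$ and property~ii) controls $\|v_1-v_2\|_{L^\infty(\partial D)}$ by $\|\Phi_1-\Phi_2\|_1$, and the interior $D\setminus(\partial D)_\delta$, where the proof of Theorem~\ref{maintheo} runs verbatim with Lemma~\ref{newlem2} replacing Lemma~\ref{lem2}. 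Setting $\delta=\bigl(\log(3+\|\Phi_1-\Phi_2\|_1^{-1})\bigr)^{-\alpha}$ balances $2N\delta$ against $\delta^{-4}\bigl(\log(3+\|\Phi_1-\Phi_2\|_1^{-1})\bigr)^{-1}$; the constraint $4\alpha<1-\alpha$ is the exact source of $\alpha<\tfrac15$.

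In your outline the decisive step, $\|\tilde\Phi_1-\tilde\Phi_2\|\le C(\eta)\bigl(\|\Phi_1-\Phi_2\|_1+\|\tilde v_1-\tilde v_2\|_{L^\infty(\tilde D\setminus D)}\bigr)$, is asserted rather than proved, and it is not a consequence of ``uniform elliptic estimates on $\tilde D\setminus D$.'' A direct approach via $(-\Delta+\tilde v_1)(u_1-u_2)=(\tilde v_2-\tilde v_1)u_2$ is circular, since the right-hand side involves $v_2-v_1$ on all of $D$, which is precisely the unknown. The alternative of gluing through $\partial D$ using the matching condition $\partial_\nu u_j=\Phi_j(u_j|_{\partial D})$ forces you to invert operators of the type $\Phi_j-B$, where $B$ is the collar DtN on $\partial D$; these are differences of first-order pseudodifferential operators whose invertibility, norm, and $\eta$-dependence as the collar collapses are far from routine, and the individual $\Phi_j$ are not bounded $L^\infty\to L^\infty$. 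Two further points: you invoke H\"older-stable recovery of $\partial_\nu v_j|_{\partial D}$, which is stronger than the Lipschitz bound for $v|_{\partial D}$ in property~ii) and is not supplied here; and the constant in Theorem~\ref{maintheo} depends on the $C^2$ bound of the potentials, which after your cutoff is $C(\eta)N$ with $C(\eta)\to\infty$ as $\eta\to0$. Without tracking all these $\eta$-dependencies explicitly, the appearance of the threshold $\tfrac15$ in your scheme is a claim rather than a computation.
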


All we need to know about $\| \cdot \|_1$ consists of the following:
\begin{itemize}
\item[\it i)] $\|A \|_{L^{\infty}(\partial D) \to L^{\infty}(\partial D)} \leq const(D) \|A\|_1$;
\item[\it ii)] by formula (4.9) of \cite{N1} one has
\begin{equation} \nonumber
\|v\|_{L^{\infty}(\partial D)} \leq const \|\Phi_{v} - \Phi_0\|_1.
\end{equation}
\end{itemize}

In order to prove Proposition \ref{mainprop} we need the following modified version of Lemma \ref{lem2}. We will call $(\partial D)_{\delta} = \{ z \in \C : dist(z, \partial D) < \delta \}$.

\begin{lem} \label{newlem2}
For $v \in C^2(\bar D)$ we have that
\begin{equation} \label{newestm}
|v(z_0) - \frac{2}{\pi}|\lambda| h^{(0)}_{z_0}(\lambda)| \leq \kappa_1(D)\delta^{-4} \frac{\log(3|\lambda|)}{|\lambda|}\|v\|_{C^2(\bar D)} + \kappa_2(D)\log(3+ \delta^{-1})\|v\|_{C( \partial D)},
\end{equation}
for $z_0 \in D \setminus (\partial D)_{\delta}$, $0 < \delta < 1$, $\lambda \in \C$, $|\lambda| \geq 1$.
\end{lem}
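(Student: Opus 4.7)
The plan is to reduce Lemma \ref{newlem2} to Lemma \ref{lem2} via a boundary-matching correction. Using boundary-fitted coordinates $(s,t)$ near $\partial D$ (arc-length $s$, signed distance $t$ with $t>0$ inside $D$), fix a smooth cutoff $\chi:\R\to[0,1]$ with $\chi(0)=1$ and $\mathrm{supp}\,\chi\subset(-1,1)$, and for a parameter $\eta\in(0,\delta/2)$ to be chosen set
$$
\tilde v_\eta(z)=\bigl(v(z_\partial)+t\,\partial_\nu v(z_\partial)\bigr)\chi(t/\eta),
$$
where $z_\partial\in\partial D$ is the boundary point closest to $z$. Then $\tilde v_\eta$ is supported in $(\partial D)_\eta$ and on $\partial D$ we have $\tilde v_\eta=v$ and $\partial_\nu\tilde v_\eta=\partial_\nu v$; hence $v_0:=v-\tilde v_\eta$ satisfies the hypotheses of Lemma \ref{lem2}. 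A direct derivative count gives
$$
\|\tilde v_\eta\|_{C^2(\bar D)}\leq C(D)\bigl(\eta^{-2}\|v\|_{C(\partial D)}+\eta^{-1}\|v\|_{C^1(\bar D)}+\|v\|_{C^2(\bar D)}\bigr).
$$

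Applying Lemma \ref{lem2} to $v_0$, and using that $\eta<\delta$ forces $\tilde v_\eta(z_0)=0$ for $z_0\in D\setminus(\partial D)_\delta$, we obtain
$$
\bigl|v(z_0)-\tfrac{2}{\pi}|\lambda|\,h^{(0),v_0}_{z_0}(\lambda)\bigr|\leq c_3(D)\,\frac{\log(3|\lambda|)}{|\lambda|}\,\|v_0\|_{C^2(\bar D)}.
$$
It remains to bound $\tfrac{2}{\pi}|\lambda|\,|h^{(0),\tilde v_\eta}_{z_0}(\lambda)|$ via the decomposition $h^{(0)}_{z_0}=h^{(0),v_0}_{z_0}+h^{(0),\tilde v_\eta}_{z_0}$. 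Since $\tilde v_\eta$ is supported at distance $\geq\delta-\eta\geq\delta/2$ from $z_0$, integration by parts in $\bar z$ based on $e^{\phi}=-\tfrac{1}{2\bar\lambda(\bar z-\bar z_0)}\partial_{\bar z}e^{\phi}$ (with $\phi=\lambda(z-z_0)^2-\bar\lambda(\bar z-\bar z_0)^2$) introduces no singularity. The first step yields the boundary integral $\tfrac{-1}{4i\bar\lambda}\int_{\partial D}\tfrac{v(z)}{\bar z-\bar z_0}e^{\phi}\,dz$, which we control via the local length bound $\int_{\partial D}\tfrac{|dz|}{|z-z_0|}\leq C(D)\log(3+\delta^{-1})$ (a consequence of $|z(s)-z_0|^2\geq s^2+\delta^2$ near the foot of the perpendicular from $z_0$ to $\partial D$). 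This produces the claimed contribution $\kappa_2(D)\log(3+\delta^{-1})\|v\|_{C(\partial D)}$.

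For the remaining interior term we integrate by parts once more: this yields a second boundary integral of order $|\lambda|^{-2}$ (bounded using $\int_{\partial D}|dz|/|z-z_0|^k\leq C(D)\delta^{1-k}$ for $k\geq 2$) and an interior integral whose integrand involves up to two $\bar z$-derivatives of $\tilde v_\eta$ times negative powers of $|z-z_0|\geq\delta/2$, integrated over the support of area $L(D)\eta$. Setting $\eta=\delta^2$ gives $\|v_0\|_{C^2(\bar D)}\leq C(D)\delta^{-4}\|v\|_{C^2(\bar D)}$, so that the application of Lemma \ref{lem2} to $v_0$ produces the $\kappa_1(D)\delta^{-4}\log(3|\lambda|)/|\lambda|\,\|v\|_{C^2(\bar D)}$ term, into which all $O(|\lambda|^{-1})$ remainders from $h^{(0),\tilde v_\eta}_{z_0}$ can be absorbed for $|\lambda|\geq 1$.

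The main obstacle lies in the bookkeeping of powers of $\delta$ and $\eta$ in the remainders after the two integrations by parts: each factor $\eta^{-k}$ from derivatives of $\chi(t/\eta)$ must be compensated by the area $O(\eta)$ of the support of $\tilde v_\eta$ together with the $|\lambda|^{-1}$ gained from the second integration by parts, while the powers $\delta^{-k}$ arising from $|z-z_0|^{-k}$ and from the boundary integrals $\int_{\partial D}|z-z_0|^{-k}|dz|$ must combine to stay under $\delta^{-4}$. The balance is achieved precisely by the choice $\eta=\delta^2$.
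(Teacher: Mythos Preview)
Your strategy --- subtract a boundary-layer correction $\tilde v_\eta$ so that $v_0=v-\tilde v_\eta$ falls under Lemma~\ref{lem2}, then estimate $h^{(0),\tilde v_\eta}_{z_0}$ directly by two integrations by parts --- is genuinely different from the paper's. The paper never invokes Lemma~\ref{lem2} as a black box; it re-runs the stationary-phase/integration-by-parts machinery of that lemma's proof with the cutoff $\chi_\delta(z-z_0)$ centred at $z_0$, tracking the $\delta$-dependence explicitly. There the factor $\delta^{-4}$ arises from H\"ormander's stationary-phase remainder applied to $v_{lin}(\cdot,z_0)\chi_\delta(\cdot-z_0)$, whose $C^4$ norm scales like $\delta^{-4}$, while the $\log(3+\delta^{-1})\|v\|_{C(\partial D)}$ term is precisely the surviving $\partial D$ boundary integral after one integration by parts --- the same integral you isolate. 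Your route is tidier conceptually (it recycles Lemma~\ref{lem2}) at the price of the auxiliary scale $\eta$ and the balancing $\eta=\delta^2$; the paper's route avoids this but has to redo all the estimates from scratch with careful $\delta$-bookkeeping.

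There is, however, a real regularity gap in your construction. Your $\tilde v_\eta=(v(z_\partial)+t\,\partial_\nu v(z_\partial))\chi(t/\eta)$ contains the coefficient $b(s)=\partial_\nu v(z_\partial(s))$, which for $v\in C^2(\bar D)$ and $\partial D\in C^2$ is only $C^1$ in $s$; hence $\partial_s^2\tilde v_\eta$ contains the undefined term $t\,b''(s)\chi(t/\eta)$, and the claimed bound on $\|\tilde v_\eta\|_{C^2(\bar D)}$ is false as written. Without $\tilde v_\eta\in C^2$ you cannot apply Lemma~\ref{lem2} to $v_0$. The fix is easy: take instead $\tilde v_\eta(z)=v(z)\,\chi(t(z)/\eta)$ with $\chi'(0)=0$. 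Since the signed distance $t$ is $C^2$ near a $C^2$ boundary, this $\tilde v_\eta$ is honestly $C^2$; on $\partial D$ one still has $\tilde v_\eta=v$ and $\partial_\nu\tilde v_\eta=\partial_\nu v$; and now $\|\tilde v_\eta\|_{C^2(\bar D)}\le C(D)\,\eta^{-2}\|v\|_{C^2(\bar D)}$, which with $\eta=\delta^2$ again yields the factor $\delta^{-4}$. With this correction the rest of your bookkeeping (the bound $\int_{\partial D}|z-z_0|^{-1}|dz|\le C(D)\log(3+\delta^{-1})$, the second integration by parts over $\mathrm{supp}\,\tilde v_\eta\subset(\partial D)_\eta$ with $|z-z_0|\ge\delta/2$, and the absorption of all $O(|\lambda|^{-1})$ remainders into $\delta^{-4}\log(3|\lambda|)/|\lambda|$) goes through as you outline.
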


\begin{proof}[Proof of Lemma \ref{newlem2}]
Let $\chi_{\delta}$ be as in the proof of Lemma \ref{lem2}. We have in particular that
\begin{equation} \label{newestchi}
\|\chi_{\delta} \|_{C^k(\C)} \leq \delta^{-k} \|\chi\|_{C^k(\C)}, \; k \in \N.
\end{equation}
Let 
$$v_{lin} (z,z_0) = v(z_0) + v_z (z_0)(z-z_0) + v_{\bar z}(z_0) (\bar z - \bar z_0),$$
for $z, z_0 \in D$, $v_z = \frac{\partial v}{\partial z}$ and $v_{\bar z} = \frac{\partial v}{\partial \bar z}$.

We can write $h^{(0)}_{z_0}(\lambda) = S_{z_0, \delta} (\lambda) + R_{z_0,\delta} (\lambda)$, where
\begin{align*}
S_{z_0,\delta} (\lambda) &= \int_{\C} e_{\lambda,z_0}(z) v_{lin} (z,z_0) \chi_{\delta} (z-z_0) d\Ree z \, d \Imm z \\
&= \int_{\C} e^{i |\lambda|(z^2 + \bar z^2)} v_{lin} (e^{-i\varphi (\lambda)}z +z_0, z_0) \chi_{\delta} (z) d\Ree z \, d \Imm z, \\
R_{z_0, \delta}(\lambda) &= \int_{D} e_{\lambda,z_0}(z) \left(v(z) -v_{lin} (z,z_0) \chi_{\delta} (z-z_0)\right) d\Ree z \, d \Imm z
\end{align*}
where $\varphi(\lambda) = \frac 1 2 (\mathrm{arg}(\lambda) - \frac{\pi}{2})$, $e_{\lambda, z_0}(z) = e^{\lambda (z-z_0)^2 - \bar \lambda (\bar z- \bar z_0)^2}$, $z_0 \in D \setminus (\partial D)_{\delta}$, $\lambda \in \C$.

Using the stationary phase method and  the explicit construction of $\chi_{\delta}$ we obtain that
\begin{align} \label{newest65}
&v(z_0) = \frac{2}{\pi} \lim_{\lambda \to \infty} |\lambda| S_{z_0, \delta} (\lambda), \\ \label{newest66}
&|v(z_0) - \frac{2}{\pi} |\lambda| S_{z_0,\delta} (\lambda)| \leq \frac{\rho_1(D)}{\delta^4} \|v\|_{C^1(\bar D)} \| \chi\|_{C^4(\C)} |\lambda|^{-1},
\end{align}
$z_0 \in D \setminus (\partial D)_{\delta}$, $0 <\delta < 1$, $\lambda \in \C$, $|\lambda| \geq 1$.
Inequality \eqref{newest66} follows from

\begin{align*} 
|v(z_0) - \frac{2}{\pi} |\lambda| S_{z_0,\delta} (\lambda)| &\leq \frac{\rho_1(D) }{ |\lambda|}\|v_{lin}\|_{C^4(\bar D)} \|\chi_{\delta} \|_{C^4(\C)} \\
&\leq \frac{\rho_1(D) }{ |\lambda| \delta^4}\|v\|_{C^1(\bar D)} \|\chi \|_{C^4(\C)},
\end{align*}
where we used \cite[Lemma 7.7.3]{H} and \eqref{newestchi}.


Integrating by parts we can write
\begin{align*}
R_{z_0, \delta}(\lambda) &= -\frac{1}{2 \bar \lambda} \int_{D} \frac{\partial}{\partial \bar z}\left( e_{\lambda,z_0}(z) \right) \frac{\left(v(z) -v_{lin} (z,z_0) \chi_{\delta} (z-z_0)\right)}{\bar z - \bar z_0} d\Ree z \, d \Imm z \\
&= R^1_{z_0, \delta}(\lambda) + R^2_{z_0, \delta}(\lambda), \\
R^1_{z_0, \delta}(\lambda) &= \frac{-1}{4i \bar \lambda} \int_{\partial {D}} \! \! \! \! e_{\lambda,z_0}(z)  \frac{\left(v(z) -v_{lin} (z,z_0) \chi_{\delta} (z-z_0)\right)}{\bar z - \bar z_0} dz, \\
R^2_{z_0, \delta}(\lambda) &= \frac{1}{2 \bar \lambda} \int_{D} e_{\lambda,z_0}(z) \frac{\partial}{\partial \bar z} \left(\frac{\left(v(z) -v_{lin} (z,z_0) \chi_{\delta} (z-z_0)\right)}{\bar z - \bar z_0} \right) d\Ree z \, d \Imm z,
\end{align*}
for $z_0 \in D \setminus (\partial D)_{\delta}$, $\lambda \in \C \setminus \{ 0 \}$. In addition, we have that
\begin{align} \label{newest61}
\frac 2 \pi |\lambda| |R^1_{z_0,\delta} (\lambda)| &\leq \kappa_2(D) \log(3+ \delta^{-1}) \|v\|_{C(\partial D)}.
\end{align}

Formula \eqref{newest61} follows from the fact that $\chi_{\delta}(z-z_0) = 0$ for $z \in \partial D, \; z_0 \in D \setminus (\partial D)_{\delta}$ and from the estimate
\begin{align*}
\frac 2 \pi |R^1_{z_0, \delta}(\lambda)| &\leq \frac 2 \pi \frac{1}{|\lambda|} \int_{\partial D}\frac{|v(z)|}{|\bar z - \bar z_0|}|dz|  \leq \frac{\kappa_2(D) \log(3 + \delta^{-1})}{|\lambda|} \|v\|_{C(\partial D)}.
\end{align*}

We now write $R^2_{z_0, \delta}(\lambda) = \frac{1}{2 \bar \lambda} (R^{2,1}_{z_0, \delta, \varepsilon}(\lambda) + R^{2,2}_{z_0, \delta, \varepsilon}(\lambda))$, with
\begin{align} \label{newest63}
R^{2,1}_{z_0, \delta, \varepsilon}(\lambda) &= \int_{B_{z_0,\varepsilon}} e_{\lambda,z_0}(z) \frac{\partial}{\partial \bar z} \left(\frac{\left(v(z) -v_{lin} (z,z_0) \chi_{\delta} (z-z_0)\right)}{\bar z - \bar z_0} \right) d\Ree z \, d \Imm z \\ \label{newest64}
R^{2,2}_{z_0, \delta, \varepsilon}(\lambda) &= \int_{D_{z_0,\varepsilon}} \! \! \! \! \! \! \! e_{\lambda,z_0}(z) \frac{\partial}{\partial \bar z} \left(\frac{\left(v(z) -v_{lin} (z,z_0) \chi_{\delta} (z-z_0)\right)}{\bar z - \bar z_0} \right) d\Ree z \, d \Imm z,
\end{align}
where $B_{z_0, \varepsilon} = \{ z \in \C : |z-z_0| < \varepsilon \}$, $D_{z_0,\varepsilon} = D \setminus B_{z_0, \varepsilon}$. In \eqref{newest63}-\eqref{newest64} we assume that $z_0 \in D \setminus (\partial D)_{\delta}, \; 0 < \varepsilon < \delta, \; \lambda \in \C$.

The final part of the proof of estimate \eqref{newestm} consists in the following. We have, for $\varepsilon < \delta /2$,
\begin{align} \label{newest67}
|R^{2,1}_{z_0, \delta, \varepsilon}(\lambda)| &\leq \frac 7 2 \pi \|v\|_{C^2(\bar D)} \varepsilon^2,
\end{align}
exactly as in \eqref{est67},
\begin{align} \nonumber
R^{2,2}_{z_0, \delta, \varepsilon}(\lambda) &= -\frac{1}{2 \bar \lambda} \int_{D_{z_0,\varepsilon}} \frac{\partial}{\partial \bar z} \left( e_{\lambda,z_0}(z) \right) \frac{1}{\bar z - \bar z_0} \\ \nonumber
&\times \frac{\partial}{\partial \bar z} \left(\frac{\left(v(z) -v_{lin} (z,z_0) \chi_{\delta} (z-z_0)\right)}{\bar z - \bar z_0} \right) d\Ree z \, d \Imm z \\ \nonumber
&= -\frac{1}{2 \bar \lambda}(R^{2,2,1}_{z_0, \delta, \varepsilon}(\lambda)+ R^{2,2,2}_{z_0, \delta, \varepsilon}(\lambda)),
\end{align}
\begin{align} \nonumber
R^{2,2,1}_{z_0, \delta, \varepsilon}(\lambda) &= \frac{1}{2 i} \int_{\partial D_{z_0,\varepsilon}} \! \! \! \! \! \! \! \! \! e_{\lambda,z_0}(z) \frac{1}{\bar z - \bar z_0} \frac{\partial}{\partial \bar z} \left(\frac{\left(v(z) -v_{lin} (z,z_0) \chi_{\delta} (z-z_0)\right)}{\bar z - \bar z_0} \right) dz \\ \nonumber
&= -\frac{1}{2 i} \int_{\partial B_{z_0,\varepsilon}} e_{\lambda,z_0}(z) \frac{1}{\bar z - \bar z_0} \frac{\partial}{\partial \bar z} \left(\frac{v(z) -v_{lin} (z,z_0)}{\bar z - \bar z_0} \right) dz \\ \nonumber
&- \frac{1}{2 i} \int_{\partial D} e_{\lambda,z_0}(z) \frac{1}{\bar z - \bar z_0} \frac{\partial}{\partial \bar z} \left(\frac{v(z)}{\bar z - \bar z_0} \right) dz,
\end{align}

\begin{align*}
R^{2,2,2}_{z_0, \delta, \varepsilon}(\lambda) &= -\int_{D_{z_0,\varepsilon}}  e_{\lambda,z_0}(z) \\
&\times \frac{\partial}{\partial \bar z} \left( \frac{1}{\bar z - \bar z_0} \frac{\partial}{\partial \bar z} \left(\frac{\left(v(z) -v_{lin} (z,z_0) \chi_{\delta} (z-z_0)\right)}{\bar z - \bar z_0} \right)  \right) d\Ree z \, d \Imm z.
\end{align*}
We have, for $\varepsilon < \delta /2$
\begin{align} \label{newest68}
|R^{2,2,1}_{z_0, \delta, \varepsilon}(\lambda)| &\leq \frac{1}{2} \int_{\partial B_{z_0,\varepsilon}} \! \! \! \! \! \! \! \! \! \! \! \frac{|v(z)-v_{lin}(z,z_0)|}{|z-z_0|^3}|dz| + \frac{1}{2} \int_{\partial B_{z_0,\varepsilon}} \! \! \! \! \! \! \! \!  \! \frac{|v_{\bar z}(z)-v_{\bar z}(z_0)|}{|z-z_0|^2}|dz| \\ \nonumber
&+ \frac{1}{2} \int_{\partial D} \frac{|v(z)|}{|z-z_0|^3}|dz| + \frac{1}{2} \int_{\partial D} \frac{|v_{\bar z}(z)|}{|z-z_0|^2}|dz| \\ \nonumber
&\leq \frac 7 2 \pi \|v\|_{C^2(\bar D)}+ \frac{\rho_2(D)}{\delta^2} \|v\|_{C^1(\bar D)}, \\
|R^{2,2,2}_{z_0, \delta, \varepsilon}(\lambda)| &\leq |R^{2,2,2}_{z_0, \delta, \delta/2}(\lambda)| + |R^{2,2,2}_{z_0, \delta, \varepsilon}(\lambda) - R^{2,2,2}_{z_0, \delta, \delta/2}(\lambda)|, \\
|R^{2,2,2}_{z_0, \delta, \delta/2}(\lambda)| &\leq \frac{\rho_3(D)}{\delta^3} \|v\|_{C^2(\bar D)},\\ \nonumber
|R^{2,2,2}_{z_0, \delta, \varepsilon}(\lambda) - &R^{2,2,2}_{z_0, \delta, \delta/2}(\lambda)| \leq \sum_{j=1}^5 \int_{B_{z_0, \delta/2} \setminus B_{z_0, \varepsilon}} \! \! \! \! \! \! \! \! \! \! \! \! \! \! \! \! \! \! \! \! \! \! \! \! \! u_j (z,z_0) d \Ree z \, d \Imm z,
\end{align}
with $u_j$ defined as in \eqref{est80}-\eqref{est84}. This yields
\begin{equation} \label{newest69}
|R^{2,2,2}_{z_0, \delta, \varepsilon}(\lambda) - R^{2,2,2}_{z_0, \delta, \delta/2}(\lambda)| \leq \rho_4(D) \log(\frac{\delta}{2 \varepsilon}) \| v \|_{C^2(\bar D)},
\end{equation}
where $z_0 \in D \setminus (\partial D)_{\delta}$, $0 < \varepsilon < \delta/2$, $\lambda \in \C \setminus \{ 0 \}$. Using \eqref{newest66}, \eqref{newest61}, \eqref{newest67}-\eqref{newest69} with $\varepsilon = |\lambda|^{-1}$ we obtain \eqref{newestm} for $|\lambda| > \frac{2}{\delta}$.

Notice that only the estimation of $|\lambda| |R^2_{z_0, \delta}(\lambda)|$ requires $|\lambda| > \frac{2}{\delta}$. In that case one has
\begin{equation} \nonumber
\frac{2}{\pi}|\lambda| |R^2_{z_0, \delta}(\lambda)| \leq \rho_5(D) \delta^{-4} \frac{\log(3|\lambda|)}{|\lambda|}\|v\|_{C^2(\bar D)}.
\end{equation}

If $1 \leq |\lambda| \leq \frac{2}{\delta}$ we have that
\begin{align}
\frac{2}{\pi}|\lambda| |R^2_{z_0, \delta}(\lambda)| \leq \frac{\rho_6(D)N}{\delta}
\end{align}
and
\begin{align}
 \rho_5(D)\delta^{-4} \frac{\log(3|\lambda|)}{|\lambda|}\|v\|_{C^2(\bar D)} \geq  \frac{\rho_5(D)}{2 \delta^3}\log(6 \delta^{-1})\|v\|_{C^2(\bar D)},
\end{align}
where we used the fact that the function $\frac{\log(3s)}{s}$ is decreasing for $s > \frac{e}{3}$.

We now define
$$c' = 	\frac{2 \rho_6(D)N}{\rho_5(D) \log(6) \|v\|_{C^2(\bar D)}},$$
in order to have
$$\frac{2}{\pi}|\lambda| |R^2_{z_0, \delta}(\lambda)| \leq c' \rho_5(D) \delta^{-4} \frac{\log(3|\lambda|)}{|\lambda|}\|v\|_{C^2(\bar D)},$$
for $1 \leq |\lambda| \leq \frac{2}{\delta}$, $0 < \delta<1$.

Thus, taking $\kappa_1 = \max(\rho_5 ,c' \rho_5, \rho_1\|\chi\|_{C^4(\C)})$, we obtain estimation \eqref{newestm} for $|\lambda| \geq 1$ and $0 < \delta < 1$. This finish the proof of Lemma \ref{newlem2}.
\end{proof}

\begin{proof}[Proof of Proposition \ref{mainprop}]
Fix $0 < \alpha < \frac{1}{5}$, and $0 < \delta < 1$. We have the following chain of inequalities
\begin{align*}
\|v_2 - v_1\|_{L^{\infty}(D)} &= \max(\|v_2 - v_1\|_{L^{\infty}(D \cap (\partial D)_{\delta})}, \|v_2 - v_1\|_{L^{\infty}(D \setminus (\partial D)_{\delta})}) \\
&\leq C_1 \max \left( 2 N\delta + \|\Phi_2 - \Phi_1\|_1 , \frac{\log (3 \log(3+\| \Phi_2 -\Phi_1\|^{-1}))}{\delta^4 \log(3 + \|\Phi_2 - \Phi_1\|^{-1})} \right. \\ 
&\quad \left. +\log(3+ \frac 1 \delta) \|\Phi_2 - \Phi_1\|_1 + \frac{\log (3 \log(3+\| \Phi_2 -\Phi_1\|^{-1}))}{(\log(3 + \|\Phi_2 - \Phi_1\|^{-1}))^{\frac 1 2}}\right) \\
&\leq C_2 \max \left( 2 N\delta + \|\Phi_2 - \Phi_1\|_1 , \frac{1}{\delta^4} \log(3 + \|\Phi_2 - \Phi_1\|_1^{-1})^{-5\alpha } \right. \\
&\quad \left. +\log(3+ \frac 1 \delta) \|\Phi_2 - \Phi_1\|_1 +\frac{\log (3 \log(3+\| \Phi_2 -\Phi_1\|_1^{-1}))}{(\log(3 + \|\Phi_2 - \Phi_1\|_1^{-1}))^{\frac 1 2}} \right),
\end{align*}
where we followed the scheme of the proof of Theorem \ref{maintheo} with the following modifications: we make use of Lemma \ref{newlem2} instead of Lemma \ref{lem2} and we also use {i)-ii)}; note that $C_1 = C_1(D, N)$ and $C_2 = C_2(D, N, \alpha)$.

Putting $\delta = \log (3 + \| \Phi_2 - \Phi_1\|_1^{-1})^{-\alpha}$ we obtain the desired inequality
\begin{align} \label{estfinal}
&\|v_2 - v_1\|_{L^{\infty}(D)} \leq C_3 \log(3 + \|\Phi_2 - \Phi_1\|_1^{-1} )^{-\alpha},
\end{align}
with $C_3 = C_3(D, N, \alpha)$, $ \|\Phi_2 - \Phi_1\|_1 = \varepsilon \leq \varepsilon_1(D,N,\alpha)$ with $\varepsilon_1$ sufficiently small or, more precisely when $\delta_1 = \log (3 + \varepsilon_1^{-1})^{-\alpha}$ satisfies: 
$$\delta_1 < 1,  \qquad \varepsilon_1 \leq 2N \delta_1,  \qquad \log(3+\frac{1}{\delta_1}) \varepsilon_1 \leq \delta_1.$$

Estimate \eqref{estfinal} for general $\varepsilon$ (with modified $C_3$) follows from \eqref{estfinal} for $\varepsilon \leq \varepsilon_1(D,N,\alpha)$ and the assumption that $\|v_j\|_{L^{\infty}(\bar D)} \leq N$ for $j=1,2$. This completes the proof of Proposition \ref{mainprop}.
\end{proof}

\end{document}